\documentclass[12pt,a4paper]{amsart}
\usepackage{cmap} 
\usepackage[T2A,T1]{fontenc} 

\usepackage{amssymb}
\usepackage{amsthm}
\usepackage{amsmath}
\usepackage{mathrsfs}
\usepackage[pdftex,unicode]{hyperref}
\usepackage[noadjust]{cite}

\usepackage{stackengine}
\stackMath
\def\cl#1{\stackengine{0pt}{}{\overline{\phantom{#1}}}{O}{l}{F}{T}{L}#1}

\newcommand\abst[1]{%
  \setbox0=\hbox{$\skew3\cl{#1}$}%
  \ht0=\dimexpr\ht0-.15ex\relax 
  \skew3\cl{\copy0}}

\def\Int{\operatorname{Int}}
\def\supp{\operatorname{supp}}

\newtheorem{theorem}{Theorem}[section]
\newtheorem{lemma}[theorem]{Lemma}
\newtheorem{proposition}[theorem]{Proposition}
\newtheorem{corollary}[theorem]{Corollary}

\theoremstyle{definition} 
\newtheorem{definition}[theorem]{Definition}
\newtheorem{question}{Question}

\theoremstyle{remark} 
\newtheorem{remark}{Remark}

\begin{document}

{
\renewcommand*{\thefootnote}{$\star$}

\title[$\varkappa$-Fr\'{e}chet--Urysohn and Baire subgroups]{$\varkappa$-Fr\'{e}chet--Urysohn and Baire subgroups\\ of free topological groups}
\footnotetext[0]{The work of O.~Sipacheva was financially supported by the Russian Science Foundation, grant~22-11-00075-P.} 

}

\author[Leiderman, Reznichenko, and Sipacheva]{Arkady Leiderman, Evgenii Reznichenko, and Ol'ga Sipacheva}

\address{Department of Mathematics, Ben-Gurion University of the Negev, 
Beer Sheva, P.O.B. 653, Israel}
\email{arkady@math.bgu.ac.il}  

\address{Department of General Topology and Geometry, Faculty of Mechanics and Mathematics, M.~V.~Lomonosov Moscow State University, Leninskie Gory 1, Moscow, 119991, Russia}
\email{erezn@inbox.ru}

\address{Department of General Topology and Geometry, Faculty of Mechanics and Mathematics, M.~V.~Lomonosov Moscow State University, Leninskie Gory 1, Moscow, 119991, Russia}
\email{ovsipa@gmail.com}

\begin{abstract}
A topological space is $\varkappa$-Fr\'echet--Urysohn if every \tolerance1000 point in the closure of an open set $U$ is the 
limit of a sequence of points of~$U$. We prove that every $\varkappa$-Fr\'echet--Urysohn subgroup and every Baire 
subgroup of the free topological group on a Tychonoff space $X$ in any full variety of topological groups (in 
particular, of the free topological group $F(X)$, the free Abelian topological group $A(X)$, and the free Boolean 
topological group $B(X)$) is discrete. In particular, each of these groups is $\varkappa$-Fr\'echet--Urysohn (or 
Baire) only if $X$ is discrete, and all their metrizable and even Fr\'echet--Urysohn subgroups are discrete. 
Along the way we characterize strongly $\varkappa$-Fr\'echet--Urysohn spaces in the spirit of Michael's theorem 
and study countable families of closed sets absorbing convergent sequences in $\varkappa$-Fr\'echet--Urysohn 
spaces and groups.
\end{abstract}

\keywords{$\varkappa$-Fr\'echet--Urysohn space, strongly $\varkappa$-Fr\'echet--Urysohn space, free topological 
group in a variety of topological groups, metrizable subgroup, $\varkappa$-Fr\'echet--Urysohn subgroup, Baire 
subgroup}

\subjclass[2020]{Primary 22A05, 54H11; Secondary 54D55, 54E52, 20E10}

\maketitle

\section*{Introduction}

Recall that a topological space $X$ is {\em Fr\'{e}chet--Urysohn} if, for any $A\subseteq X$ and $x\in 
\cl{A}$, there exists a sequence of elements of $A$ that converges to~$x$, and {\em 
$\varkappa$-Fr\'{e}chet--Urysohn} if this holds for all open sets $A$. The latter notion was introduced by 
Arhangel'skii in 1999 as a natural extension of the notion of a space of countable $\varkappa$-pseudocharacter, 
and it has become increasingly important in topology, topological algebra, and functional analysis 
\cite{LiL, Sak2, Sakai-3, GOR2026kFUgr, GR2026New_classes, G2019-Baire_functions, GOR-Baire, Osipov2025kfu, 
KrupskiKucharskiMarciszewski2025}. Every metrizable space is Fr\'echet--Urysohn, and every Fr\'echet--Urysohn 
space is $\varkappa$-Fr\'echet--Urysohn. Any locally compact (and, more generally, feathered) topological group 
is $\varkappa$-Fr\'{e}chet--Urysohn \cite[Theorem 6.1]{GR2026New_classes}, and a topological group containing a 
dense $\varkappa$-Fr\'{e}chet--Urysohn subspace is $\varkappa$-Fr\'{e}chet--Urysohn 
\cite[Corollary~2.2]{G2019-Baire_functions}. In particular, if a topological group contains a dense 
Fr\'echet--Urysohn subspace, then it is $\varkappa$-Fr\'echet--Urysohn.

Morris and Thompson \cite[Corollary~1]{MT1986} proved that every Fr\'echet--Urysohn subgroup of the free 
topological group on a Tychonoff space is discrete. We develop this result in two directions: first, we 
consider $\varkappa$-Fr\'echet--Urysohn subgroups, and second, we consider free topological groups on Tychonoff 
spaces in arbitrary full varieties of topological groups. Furthermore, we study Baire subgroups of free 
topological groups. The main result of this paper is that all $\varkappa$-Fr\'echet--Urysohn and all Baire 
subgroups of the free topological group $F^{\mathscr{V}}(X)$ on a Tychonoff space $X$ in a full variety 
$\mathscr{V}$ of topological groups are discrete (Theorems~\ref{t2} and~\ref{t3}); consequently, 
$F^{\mathscr{V}}(X)$ is $\varkappa$-Fr\'echet--Urysohn or Baire if and only if $X$ is discrete 
(Corollary~\ref{cor:FVX-kfu}). In particular, if a subgroup of the free topological group $F(X)$ contains a dense 
Fr\'echet--Urysohn subspace, then it is discrete.

To obtain the main results, we develop the theory of $\varkappa$-Fr\'echet--Ury\-sohn spaces and 
$\varkappa$-Fr\'echet--Urysohn topological groups. By analogy with strongly Fr\'echet--Urysohn spaces, we 
introduce strongly $\varkappa$-Fr\'echet--Urysohn spaces and prove an analogue of Nyikos' theorem: every 
$\varkappa$-Fr\'echet--Urysohn topological group is strongly $\varkappa$-Fr\'echet--Ury\-sohn (Theorem~\ref{t1}). 
Strongly $\varkappa$-Fr\'echet--Urysohn spaces are quite similar to strongly Fr\'echet--Urysohn spaces; in 
particular, $X$ is strongly $\varkappa$-Fr\'echet--Urysohn if and only if $X\times[0,1]$ is 
$\varkappa$-Fr\'echet--Urysohn (Proposition~\ref{p2}). Homogeneous hemicompact $\varkappa$-Fr\'echet--Urysohn 
spaces are locally compact (Corollary~\ref{c2}), which extends a result of~\cite{GOR2026kFUgr}. 

The paper is organized as follows. The brief preliminaries section is followed by Section~\ref{sec:kfu} devoted 
to $\varkappa$-Fr\'echet--Urysohn and strongly $\varkappa$-Fr\'echet--Urysohn topological spaces and groups. 
In Section~\ref{sec:absorb} we introduce and study countable families of closed sets absorbing convergent 
sequences, which play a key role in the proof of the main results. Section~\ref{sec:free_abstract_groups} is 
purely algebraic; it describes some properties of the naturally defined support of an element of a free group in 
an arbitrary variety of abstract groups. In Section~\ref{sec:free_top_groups}, we prove the main theorems. 
Section~\ref{sec:questions} contains open questions.

Similar methods apply to subgroups and linear subspaces of topological vector spaces; these applications are 
the subject of the companion paper~\cite{LRS-tvs}.

\section{Preliminaries}

All topological spaces under consideration are assumed to be Tychonoff. In what follows, we sometimes identify 
sequences with the indexed sets of their values. We use the standard notation $\omega$ for the set of nonnegative 
integers. Given a subset $A$ of a group $G$, by $\langle A\rangle_G$ we denote the smallest subgroup of $G$ 
containing~$A$; we write $\langle A\rangle$ instead of $\langle A\rangle_G$ when it is clear which group $G$ is 
meant. For subsets $A$ and $B$ of a group and a positive integer $m$, by $AB$ we denote the set $\{ab:a\in A,\ 
b\in B\}$ and by $A^m$, the set of all products of $m$ elements of~$A$.

We use the notation $\cl A$ for the closure of a subset $A$ of a topological space, $\Int A$ for its 
interior, and $|A|$ for the cardinality of~$A$.

A space $Z$ is the \emph{inductive limit} of an increasing sequence $(K_n)_{n\in\omega}$ of its subspaces if 
$Z=\bigcup_{n\in\omega}K_n$ and a set $F\subset Z$ is closed in $Z$ if and only if $F\cap K_n$ is closed in $K_n$ 
for every $n\in\omega$.

A subset $A$ of a topological group $G$ is \emph{precompact} in $G$ if, for every neighborhood $O$ of the 
identity element of $G$, there is a finite set $M\subset G$ such that $A\subset MO$, or, equivalently, the closure 
of $A$ in the Ra\u\i kov completion $\varrho G$ of $G$ is compact \cite[Theorem 3.7.10]{AT}. Since the 
Ra\u\i kov completion of a subgroup $H$ of $G$ is the closure of $H$ in $\varrho G$, a subset of $H$ is 
precompact in $H$ if and only if it is precompact in~$G$. A topological group is \emph{locally precompact} 
if its identity element has a precompact neighborhood.

We need the following version of the $\Delta$-system lemma for countable families of sets 
(see, e.g., \cite[Problems~1 and~3 on pp.~107, 421,~422]{Komjath-Totik}).

\begin{proposition}
\label{prop:Delta-system} Let $n\in\omega$, and let $\mathscr{A}$ be an infinite family of sets such that $|M|\le 
n$ for all $M\in \mathscr{A}$. Then there exist a set $R$ and an infinite family $\mathscr{B}\subset \mathscr{A}$ 
of sets of the same cardinality such that $A\cap B=R$ for any distinct $A,B\in \mathscr{B}$. Moreover, 
$A\setminus R\neq\varnothing$ for all $A\in\mathscr{B}$. \end{proposition}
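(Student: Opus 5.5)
We prove the statement by induction on $n$, with two refinements: making all members of $\mathscr{B}$ have the same cardinality, and then extracting the common kernel $R$. First pass to an infinite subfamily of sets of equal size. Every $M\in\mathscr{A}$ has $|M|\in\{0,1,\dots,n\}$, a finite set of values, so by the pigeonhole principle some $k\le n$ is the cardinality of infinitely many members of $\mathscr{A}$. Replacing $\mathscr{A}$ by these, we may assume $|M|=k$ for all $M\in\mathscr{A}$. Note $k\ge1$, since at most one set has cardinality $0$. We may also take $\mathscr{A}$ countably infinite.

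Next we prove, by induction on $k\ge1$, that an infinite family of $k$-element sets contains an infinite $\Delta$-subsystem. We split into two cases according to whether some point belongs to infinitely many members of $\mathscr{A}$.

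\emph{Case 1: every point lies in only finitely many members.} Build $A_0,A_1,\dots$ recursively so that they are pairwise disjoint. Having chosen $A_0,\dots,A_{m-1}$, their union $S$ is finite. Only finitely many members of $\mathscr{A}$ meet $S$, so we can pick $A_m\in\mathscr{A}$ disjoint from $S$. Then $\mathscr{B}=\{A_m\}$ is a $\Delta$-system with $R=\varnothing$.

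\emph{Case 2: some point $x$ lies in infinitely many members.} This case is impossible when $k=1$, because distinct singletons cannot share a point; so Case 1 is the base of the induction. For $k\ge2$, let $\mathscr{A}'=\{M\setminus\{x\}: x\in M\in\mathscr{A}\}$. This is an infinite family, since $M\mapsto M\setminus\{x\}$ is injective on sets containing $x$, and its members are $(k-1)$-element sets. By induction it has an infinite $\Delta$-subsystem with kernel $R'$. Putting $x$ back gives an infinite $\Delta$-subsystem of $\mathscr{A}$ with kernel $R=R'\cup\{x\}$.

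Finally, check the ``moreover'' clause. In $\mathscr{B}$ all members have the same size and are distinct. If $A\setminus R=\varnothing$ for some $A\in\mathscr{B}$, then $A\subseteq R$. On the other hand, for any other $B\in\mathscr{B}$ we have $R=A\cap B\subseteq A$, so $A=R$. But then $|B|=|A|=|R|$ together with $R\subseteq B$ forces $B=R=A$, contradicting distinctness. Hence $A\setminus R\ne\varnothing$ for all $A\in\mathscr{B}$.

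The only point that needs care is the case split on point multiplicities in Case 2: we must remove $x$ and check that the resulting family is still infinite, which holds because removing the same point is injective on the sets containing it. Everything else is routine pigeonhole.
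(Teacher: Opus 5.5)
Your proof is correct. The reduction to a common size $k\ge1$ works, and so does the case split: if no point lies in infinitely many members you choose pairwise disjoint sets greedily, and otherwise you delete a point lying in infinitely many members and apply the induction to the resulting $(k-1)$-element sets. Deriving $A\setminus R\ne\varnothing$ from the equal finite cardinalities is also right.

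The paper gives no proof of its own here and only cites Komj\'ath--Totik. Your argument is the standard proof of this countable $\Delta$-system lemma. One cosmetic point: your opening says ``induction on $n$'', but the induction you actually run is on $k$.
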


\section{$\varkappa$-Fr\'echet--Urysohn spaces}
\label{sec:kfu}

A topological space $X$ is {\em strongly Fr\'{e}chet--Urysohn} if, given any $x\in X$ and any decreasing sequence 
$(A_n)_{n\in \omega}$ of subsets of $X$ for which $x\in \bigcap_{n\in \omega} \cl A_n$, there is a sequence 
$(x_n)_{n \in\omega}$ converging to $x$ such that $x_n \in  A_n$ for all $n \in \omega$~\cite{Siwiec}.

\begin{definition}
We say that a topological space $X$ is {\it strongly $\varkappa$-Fr\'{e}chet--Urysohn} if, given any $x\in X$ and 
any decreasing sequence $(U_n)_{n\in \omega}$ of open subsets of $X$ for which $x\in \bigcap_{n\in \omega} 
\cl U_n$, there exists a sequence $(x_n)_{n \in\omega}$ converging to $x$ such that $x_n \in  U_n$ for all 
$n \in \omega$. \end{definition}

Clearly, any strongly $\varkappa$-Fr\'{e}chet--Urysohn space is $\varkappa$-Fr\'{e}chet--Ury\-sohn.
In \cite{Arhangelskij1972}, strongly Fr\'{e}chet--Urysohn spaces were characterized as 
Fr\'{e}chet--Urysohn spaces with the $(\alpha_4)$ property. Recall that a topological space $X$ has the \emph{$(\alpha_4)$ 
property} if, for any $x\in X$ and any sequence $(\xi_n)_{n\in\omega}$ of sequences converging to $x$, there 
exists a sequence $\xi$ converging to $x$ and such that $|\{n\in\omega: \xi\cap \xi_n\neq\varnothing\}|=\omega$. 
A similar characterization holds for strongly $\varkappa$-Fr\'{e}chet--Urysohn spaces.

\begin{proposition} 
\label{p1}
For a topological space $X$, the following conditions are equivalent:
\begin{enumerate}
\item[{\rm(i)}] 
$X$ is strongly $\varkappa$-Fr\'{e}chet--Urysohn;
\item[{\rm(ii)}]
given any $x\in X$ and any sequence $(U_n)_{n\in\omega}$ of open subsets of $X$ for which 
$x\in \bigcap_{n\in \omega} \cl U_n$, there exist a sequence $(x_k)_{k\in\omega}$ converging to $x$ and 
a strictly increasing sequence $(n_k)_{k\in\omega}$ of nonnegative integers such that 
$x_k\in U_{n_k}$ for all $k\in\omega$.
\end{enumerate}
\end{proposition}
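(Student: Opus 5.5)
We prove the two implications separately. The implication (ii)$\Rightarrow$(i) is the substantive one. The implication (i)$\Rightarrow$(ii) is a routine "thinning" argument for a sequence of open sets that need not be decreasing.

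\emph{(i)$\Rightarrow$(ii).} Let $x\in\bigcap_n\cl U_n$ with the $U_n$ open but not necessarily decreasing. If $x\in U_n$ for infinitely many $n$, we take $x_k=x$ and let $(n_k)$ enumerate these indices. Otherwise we discard finitely many indices and assume $x\notin U_n$ for all $n$. We then put $V_m=\bigcup_{n\ge m}U_n$; this is a decreasing sequence of open sets with $x\in\cl V_m$. By (i) there are points $y_m\in V_m$ with $y_m\to x$, and we choose indices $j_m\ge m$ with $y_m\in U_{j_m}$. Since $j_m\to\infty$, we can pass to a subsequence on which the $j_m$ are strictly increasing. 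Set $x_k=y_{m_k}$ and $n_k=j_{m_k}$. A subsequence of a convergent sequence converges to the same limit, so this gives (ii).

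\emph{(ii)$\Rightarrow$(i).} Let $(U_n)$ be a decreasing sequence of open sets with $x\in\bigcap_n\cl U_n$. Applying (ii) to $(U_n)$ itself gives $x_k\in U_{n_k}$ with $n_k$ strictly increasing and $x_k\to x$. Because the $U_n$ decrease, $x_k\in U_{n_k}\subseteq U_m$ whenever $n_k\ge m$. For $m\le n_0$ we choose $z_m=x_0$, which lies in $U_m$. For $n_{k-1}<m\le n_k$ we choose $z_m=x_k$, which lies in $U_{n_k}\subseteq U_m$. The sequence $(z_m)$ runs through each $x_k$ finitely many times, so it converges to $x$, and $z_m\in U_m$ for every $m$.

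The only point needing care is the convergence claim for $(z_m)$. A sequence obtained by repeating the terms of a convergent sequence finitely many times each still converges to the same limit. Any neighborhood $O$ of $x$ contains $x_k$ for all $k\ge k_0$, and hence contains $z_m$ for all $m>n_{k_0-1}$.

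In the (i)$\Rightarrow$(ii) direction, the case $x\in U_n$ for infinitely many $n$ must be split off. Otherwise the points $y_m$ could equal $x$, but it is handled trivially by the constant sequence at $x$. Even without this split the argument works, since $y_m\in U_{j_m}$ with $j_m\ge m$ already holds. We therefore expect no real obstacle in either direction.
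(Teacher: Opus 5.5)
Your proof is correct and follows the paper's argument. For (i)$\Rightarrow$(ii), the paper likewise applies (i) to the decreasing unions $\bigcup_{k\ge n}U_k$ and thins out to make the indices strictly increasing; your case split on whether $x\in U_n$ infinitely often is, as you note yourself, unnecessary. For (ii)$\Rightarrow$(i), your padded sequence $(z_m)$ works, but the paper simply observes that a strictly increasing sequence of nonnegative integers satisfies $n_k\ge k$, so $x_k\in U_{n_k}\subset U_k$ directly and no filling-in is needed.
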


\begin{proof}
(i)\,$\Rightarrow$\,(ii):\enspace
For each $n\in\omega$, let $W_n=\bigcup_{k\ge n} U_k$. The sets $W_n$ are open and decreasing, and 
$x\in\cl W_n$ for all $n$.
It follows from (i) that there is a sequence $(y_n)_{n \in\omega}$ converging to $x$ 
and such that $y_n \in W_n$ for all $n \in \omega$, i.e., $y_n\in U_{m_n}$ for some $m_n\geq n$. 
Since $m_n\to\infty$, we can choose indices $j_0<j_1<\dots$ so that $m_{j_0}<m_{j_1}<\dots$\,. 
It remains to set $x_k=y_{j_k}$ and $n_k=m_{j_k}$.

(ii)\,$\Rightarrow$\,(i):\enspace 
Let $x\in X$, and let $(U_n)_{n\in \omega}$ be a decreasing sequence of open subsets in $X$ for which $x\in 
\bigcap_{n\in \omega} \cl U_n$. By (ii) there exist a sequence $(x_k)_{k\in\omega}$ converging to $x$ and 
a strictly increasing sequence $(n_k)_{k\in\omega}$ such that $x_k\in U_{n_k}$. Since $n_k\geq k$ and 
the sequence $(U_n)$ decreases, we have $x_k\in U_k$ for all $k\in\omega$.
\end{proof}

In \cite{Michael1972} Michael  characterized strongly Fr\'{e}chet--Urysohn spaces as countably bi-sequential 
spaces and proved that a space $X$ is strongly Fr\'echet--Urysohn if and only if $X \times [0, 1]$ 
is 
Fr\'{e}chet--Urysohn. Strongly $\varkappa$-Fr\'{e}chet--Urysohn spaces have a similar characterization. In 
the following proposition, $\omega+1$ denotes the ordinal $\omega+1$ with the linear order topology, i.e., the 
sequence of nonnegative integers converging to $\omega$ together with its limit~$\omega$. 

\begin{proposition} 
\label{p2}
For a topological space $X$, the following conditions are equivalent:
\begin{enumerate}
\item[{\rm(i)}] 
$X$ is strongly $\varkappa$-Fr\'{e}chet--Urysohn;
\item[{\rm(ii)}] 
$X\times (\omega + 1)$ is $\varkappa$-Fr\'{e}chet--Urysohn;
\item[{\rm(iii)}] 
$X\times [0,1]$ is $\varkappa$-Fr\'{e}chet--Urysohn;
\item[{\rm(iv)}] 
$X\times Y$ is $\varkappa$-Fr\'{e}chet--Urysohn for any first-countable space~$Y$;
\item[{\rm(v)}] 
$X\times Y$ is strongly $\varkappa$-Fr\'{e}chet--Urysohn for any first-countable space~$Y$.
\end{enumerate}
\end{proposition}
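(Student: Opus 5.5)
The plan is to prove the cycle (i)$\Rightarrow$(v)$\Rightarrow$(iv)$\Rightarrow$(iii)$\Rightarrow$(ii)$\Rightarrow$(i). The implication (v)$\Rightarrow$(iv) is trivial. For (iv)$\Rightarrow$(iii), note that $[0,1]$ is first-countable. For (iii)$\Rightarrow$(ii), observe that $\omega+1$ embeds in $[0,1]$ as the closed subspace $\{1/(n+1)\}\cup\{0\}$. However, the $\varkappa$-Fr\'echet--Urysohn property is not obviously hereditary for closed subspaces, so I would argue directly. Given an open $U\subset X\times(\omega+1)$, thicken each $U\cap (X\times\{n\})=V_n\times\{n\}$ to the open set $V_n\times I_n$, where the $I_n$ are pairwise disjoint open intervals around $1/(n+1)$ shrinking to~$0$. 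Let $W=\bigcup_n V_n\times I_n$, and add the part of $U$ near level $\omega$ (which is open in $X\times(\omega+1)$) in the same way. A sequence in $W$ converging to a point $(x,0)$ can then be pushed to the points $1/(n+1)$ by keeping the $X$-coordinate and replacing the second coordinate. The pushed sequence still converges, because the second coordinates tend to $0$ and $X$-convergence is preserved. Since $\cl W\cap(X\times\{0\})$ is controlled by $\cl U$, this gives (ii).

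For (ii)$\Rightarrow$(i) I use Proposition~\ref{p1}(ii), or the definition directly. Let $(U_n)$ be a decreasing sequence of open sets with $x\in\bigcap\cl U_n$. Put $U=\bigcup_n U_n\times\{n\}$, which is open in $X\times(\omega+1)$. I claim $(x,\omega)\in\cl U$: a basic neighborhood $O\times\{m:m\ge k\}\cup\{\omega\}$ meets $U_k\times\{k\}$. By (ii) there is a sequence $(y_j,m_j)\to(x,\omega)$ with $y_j\in U_{m_j}$ and $m_j\to\infty$. Passing to a subsequence with strictly increasing $m_j$ gives exactly condition (ii) of Proposition~\ref{p1}, which yields (i).

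The main step is (i)$\Rightarrow$(v). Let $Y$ be first-countable, let $(W_n)$ be a decreasing sequence of open sets in $X\times Y$, and let $(x,y)\in\bigcap_n\cl W_n$. Let $(B_k)$ be a decreasing local base at $y$. For each $n$, set
\[
U_n=\{z\in X: (\{z\}\times B_n)\cap W_n\neq\varnothing\}=\pi_X\bigl(W_n\cap (X\times B_n)\bigr),
\]
which is open because projections are open maps. The sets $U_n$ decrease. Moreover $x\in\cl U_n$: any neighborhood $O\times B_n$ of $(x,y)$ meets $W_n$, so $O$ meets $U_n$. By (i) there are $x_n\in U_n$ with $x_n\to x$. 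Choose $y_n\in B_n$ with $(x_n,y_n)\in W_n$. Then $y_n\to y$ because $(B_n)$ is a decreasing base, so $(x_n,y_n)\to(x,y)$. This proves (v).

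Thus the only delicate point is (iii)$\Rightarrow$(ii), namely transferring the property to the closed subspace $X\times(\omega+1)$. The alternative is to route (iii)$\Rightarrow$(i) directly, mimicking the (ii)$\Rightarrow$(i) argument inside $X\times[0,1]$ with $U=\bigcup_n U_n\times J_n$, where the $J_n$ are disjoint open intervals accumulating only at $0$, and the target point is $(x,0)$. The witnessing sequence then has second coordinates in $J_{m_j}$ with $m_j\to\infty$. This avoids the subspace issue altogether, and I would use this route.
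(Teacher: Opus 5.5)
Your argument is correct. It uses the same two devices as the paper: the open sets $\bigcup_n U_n\times\{n\}$ and $\bigcup_n U_n\times J_n$ for (ii)$\Rightarrow$(i) and (iii)$\Rightarrow$(i), which are exactly the paper's arguments, and projecting a slice over a shrinking open base at $y$ for the forward direction.

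The one genuine difference is how you reach (v).
\begin{itemize}
\item The paper proves only (i)$\Rightarrow$(iv). It works with a single open set $U$ and sets $U_n=\pi_X\bigl(U\cap\pi_Y^{-1}(V_n)\bigr)$. It then gets (iv)$\Rightarrow$(v) by a bootstrap: $Y\times(\omega+1)$ is first-countable, so $X\times Y\times(\omega+1)$ is $\varkappa$-Fr\'echet--Urysohn by (iv), and (ii)$\Rightarrow$(i) applied to the space $X\times Y$ yields (v).
\item You instead let the decreasing open sets and the shrinking base move together, taking $U_n=\pi_X\bigl(W_n\cap(X\times B_n)\bigr)$. This proves (i)$\Rightarrow$(v) in one step, and (iv) becomes a trivial consequence.
\end{itemize}
Your route is a little more direct and self-contained. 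The paper's route shows that (iv)$\Rightarrow$(v) follows formally from (ii)$\Rightarrow$(i), with no further work on the space.

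One bookkeeping point. You switched from (iii)$\Rightarrow$(ii) to (iii)$\Rightarrow$(i), which is the right call, since your thickening sketch for (iii)$\Rightarrow$(ii) is too loose as written. After that switch, nothing in your chain implies (ii). You need to add (iv)$\Rightarrow$(ii), which is immediate because $\omega+1$ is first-countable.
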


\begin{proof}
(ii)\,$\Rightarrow$\,(i):\enspace 
In view of Proposition~\ref{p1}, it suffices to show that if $x\in X$ and $(U_n)_{n\in\omega}$ is a sequence of 
open subsets in $X$ for which $x\in \bigcap_{n\in \omega} \cl U_n$, then condition~(ii) of 
Proposition~\ref{p1} holds. We set $U=\bigcup_{n\in\omega}U_n\times \{n\}$. Then $U$ is open and $(x,\omega)\in 
\cl U$. From (ii) it follows that there exists a sequence $\bigl((y_j,m_j)\bigr)_{j\in \omega}\subset U$ 
converging to $(x,\omega)$; thus, $y_j\in U_{m_j}$, $y_j\to x$, and $m_j\to\infty$. Passing to a subsequence, we 
can assume that the $m_j$ strictly increase, as required. 

(iii)\,$\Rightarrow$\,(i):\enspace 
The proof is similar to that of (ii)\,$\Rightarrow$\,(i). Let $x\in X$, and let $(U_n)_{n\in\omega}$ be a sequence of 
open subsets of $X$ for which $x\in\bigcap_{n\in\omega}\cl U_n$. We set 
$U=\bigcup_{n\in\omega}U_n\times\bigl(\frac1{n+2},\frac1{n+1}\bigr)$. Then $U$ is open in $X\times[0,1]$ and 
$(x,0)\in\cl U$. By (iii) there exists a sequence $\bigl((y_j,t_j)\bigr)_{j\in\omega}\subset U$ converging to 
$(x,0)$. Let $m_j$ be such that $(y_j,t_j)\in U_{m_j}\times\bigl(\frac1{m_j+2},\frac1{m_j+1}\bigr)$. Then $y_j\in U_{m_j}$, 
$y_j\to x$, and $m_j\to\infty$, because $t_j\to0$. Passing to a subsequence, we can assume that the $m_j$ 
strictly increase, so that condition~(ii) of Proposition~\ref{p1} holds. 

(i)\,$\Rightarrow$\,(iv):\enspace 
Take $(x,y)\in X\times Y$ and an open subset $U$ of $X\times Y$ for which $(x,y)\in \cl U$.
Let $\{V_n:n\in\omega\}$ be a decreasing countable base of open neighborhoods of $y$, and let 
\begin{align*}
\pi_X &\colon X\times Y \to X,\ (x,y)\mapsto x,
\\
\pi_Y &\colon X\times Y \to Y,\ (x,y)\mapsto y
\end{align*}
be the projections. We set $U_n=\pi_X(U\cap \pi^{-1}_Y(V_n))$ for $n\in\omega$. Then $x\in \cl U_n$ and 
$U_{n+1}\subset U_n$ for $n\in\omega$. From (i) it follows that there exists a sequence $(x_n)_{n \in \omega}$ 
converging to $x$ and such that $x_n \in U_n$ for all $n \in \omega$. For each $n \in \omega$, choose a $y_n \in 
V_n$ such that $(x_n,y_n) \in U$. The sequence $\bigl((x_n,y_n)\bigr)_{n \in \omega}$ converges to $(x,y)$.

The implications (iv)\,$\Rightarrow$\,(ii), (iv)\,$\Rightarrow$\,(iii), and (v)\,$\Rightarrow$\,(iv) are obvious. 

It remains to prove that (iv)\,$\Rightarrow$\,(v). Let $Y$ be a first-countable space. Then $Y\times 
(\omega+1)$ is first-countable and (iv) implies that $X\times Y\times (\omega+1)$ is 
$\varkappa$-Fr\'{e}chet--Urysohn. Since (ii)\,$\Rightarrow$\,(i), it follows that $X\times Y$ is strongly 
$\varkappa$-Fr\'{e}chet--Urysohn. 
\end{proof}

In \cite{Nyikos1981} Nyikos proved that any Fr\'{e}chet--Urysohn group is strongly Fr\'{e}chet. An immediate 
corollary of Proposition~\ref{p2} and Theorem~2.1 of \cite{GOR2026kFUgr} is the following analogue of Nyikos's 
theorem.

\begin{theorem} 
\label{t1}
Any $\varkappa$-Fr\'{e}chet--Urysohn group is strongly $\varkappa$-Fr\'{e}\-chet--Ury\-sohn.
\end{theorem}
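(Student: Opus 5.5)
The plan is to reduce Theorem~\ref{t1} to Proposition~\ref{p2} via the implication (ii)\,$\Rightarrow$\,(i). It suffices to show that if $G$ is a $\varkappa$-Fr\'echet--Urysohn topological group, then $G\times(\omega+1)$ is $\varkappa$-Fr\'echet--Urysohn. For this I would use Theorem~2.1 of \cite{GOR2026kFUgr}, which I expect to be a product-type result: the product of a $\varkappa$-Fr\'echet--Urysohn group with a first-countable (or metrizable compact) space is again $\varkappa$-Fr\'echet--Urysohn. Once that is available, condition~(ii) (or~(iii), via $[0,1]$) of Proposition~\ref{p2} holds for $X=G$, and condition~(i) follows immediately.

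If that theorem is not phrased this way, I would prove Proposition~\ref{p1}(ii) directly for $G$, using homogeneity. Let $x=e$ and let $U_n$ be open sets with $e\in\bigcap_n\cl U_n$. The natural trick is to fix a decreasing base of open symmetric neighbourhoods $V_n$ of $e$ (possible only locally) and form a single open set $U=\bigcup_n (U_n\cap W_n)$, where the $W_n$ are chosen so that any sequence in $U$ converging to $e$ must meet infinitely many different $U_n$.

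Groups need not be first countable, so the $W_n$ cannot come from a countable base. Instead I would use translations: choose points $g_n\to$ something, or use the group operation so that pieces of different $U_n$ are ``separated'' near $e$ but still accumulate at $e$. Concretely, one could pick the sets $U_n$ disjoint from a fixed neighbourhood shrinking in a controlled manner. This separation step is the main obstacle. The Nyikos argument for Fr\'echet groups uses products of sequences ($x_n\cdot y_n$), and here one would adapt it: consider open sets $U_n\cdot O$ and the multiplication map.

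Since the paper states Theorem~\ref{t1} as an immediate corollary of Proposition~\ref{p2} and \cite[Theorem~2.1]{GOR2026kFUgr}, I will commit to the first route. Cite that result to get that $G\times(\omega+1)$ (equivalently $G\times[0,1]$) is $\varkappa$-Fr\'echet--Urysohn, then apply Proposition~\ref{p2}, (ii)\,$\Rightarrow$\,(i) (respectively (iii)\,$\Rightarrow$\,(i)).
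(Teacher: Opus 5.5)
Your committed route is the paper's: the theorem is stated there as an immediate corollary of Proposition~\ref{p2} and \cite[Theorem~2.1]{GOR2026kFUgr}, where the cited result is used, as in your plan, to verify one of the product conditions (ii)--(iv) of Proposition~\ref{p2} for $X=G$, and the implication to (i) then gives strong $\varkappa$-Fr\'echet--Urysohnness. The direct argument sketched in your middle paragraphs is too vague to count as a proof, but you do not rely on it, so the proposal stands.
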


\section{Families absorbing convergent sequences}
\label{sec:absorb}

We say that a family $\mathscr F$ of subsets of $X$ \emph{absorbs convergent sequences} in $X$ if, for every 
convergent sequence $(x_n)_{n\in \omega}$ in $X$, there exists an $F\in \mathscr F$ such that $\{x_n: n\in 
\omega\}\subset F$. Note that $\bigcup\mathscr F=X$ for any such family~$\mathscr F$, because every point $x\in 
X$ generates the constant sequence converging to~$x$. 

\begin{remark}
\label{rem:finer}
If $X$ is a topological space with topology $\tau$, $\tau'$ is another topology on $X$ finer than $\tau$, and a 
family $\mathscr F$ of subsets of $X$ absorbs convergent sequences in $(X,\tau)$, then $\mathscr F$ absorbs 
convergent sequences in $(X, \tau')$. Similarly, if $\mathscr F$ absorbs convergent sequences in $X$ and 
$Y\subset X$, then $\{F\cap Y:F\in\mathscr F\}$ absorbs convergent sequences in~$Y$. 
\end{remark}

\begin{lemma}
\label{lemma:ind-limit}
Let a space $Z$ be the inductive limit of an increasing sequence $(K_n)_{n\in\omega}$ of its closed subspaces. 
Then the family $\{K_n:n\in\omega\}$ absorbs convergent sequences in~$Z$.
\end{lemma}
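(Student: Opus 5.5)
We argue by contradiction, in the standard way one shows that an inductive limit of closed subspaces has no convergent sequence escaping all the pieces. Let $(x_n)_{n\in\omega}$ converge to $x\in Z$, and suppose that no $K_m$ contains the set $S=\{x_n:n\in\omega\}$. Since $x\in Z=\bigcup_m K_m$, fix $m_0$ with $x\in K_{m_0}$. We choose an increasing sequence of indices as follows. Given $m_k$, the set $S$ is not contained in $K_{m_k}$, so there is $n_k$ with $x_{n_k}\notin K_{m_k}$. Since the union of the $K_m$ is $Z$ and they increase, we can pick $m_{k+1}>m_k$ with $x_{n_k}\in K_{m_{k+1}}$.

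The indices $n_k$ are pairwise distinct. Indeed, for $j<k$ we have $x_{n_j}\in K_{m_{j+1}}\subset K_{m_k}$, whereas $x_{n_k}\notin K_{m_k}$. Hence $(x_{n_k})_{k\in\omega}$ is a subsequence of $(x_n)$, up to reordering, and so it still converges to $x$. Moreover, $x_{n_k}\ne x$ for every $k$, because $x\in K_{m_0}\subset K_{m_k}$ and $x_{n_k}\notin K_{m_k}$.

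Now set $F=\{x_{n_k}:k\in\omega\}$. We claim that $F$ is closed in $Z$, which uses the inductive limit property. Fix $m$. Every $K_m$ is contained in $K_{m_k}$ for $k$ large enough, because $m_k\to\infty$. For such $k$ and all $j\ge k$, the point $x_{n_j}$ lies outside $K_{m_j}\supset K_m$. So $F\cap K_m$ is finite, and a finite set is closed in the Tychonoff (hence $T_1$) space $K_m$. Since this holds for every $m$, the set $F$ is closed in $Z$.

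On the other hand, $x\notin F$ while $x_{n_k}\to x$, so $x\in\cl F$, which contradicts the closedness of $F$. Therefore some $K_m$ contains $S$, which proves the lemma.

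The main point requiring care is the construction of the indices: it must make the $x_{n_k}$ distinct, keep them away from $x$, and give each $K_m$ only finitely many of them. Closedness of the $K_n$ in $Z$ is not actually needed beyond what the inductive limit property already provides.
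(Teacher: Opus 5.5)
Your proof is correct and is essentially the paper's argument: pass to a subsequence whose $k$th term lies outside the $k$th member of an increasing chain of the $K_m$, note that its range meets each $K_m$ in a finite (hence closed) set, conclude from the inductive-limit property that it is closed in $Z$, and derive a contradiction since it accumulates at $x$ but does not contain it. The only difference is bookkeeping: the paper takes strictly increasing indices $k_j$ directly, using that every finite set lies in some $K_n$, and drops the first $m$ terms to exclude $x$, whereas you interleave the indices $m_k$ to make the points distinct and different from $x$ from the outset.
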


\begin{proof}
Suppose that a sequence $(x_k)_{k\in\omega}$ converges to $x\in Z$ but is contained in no $K_n$. 
Any finite subset of $Z$ is contained in some $K_n$, because the $K_n$ increase and cover $Z$. Hence, for each 
$j\in\omega$ and each $p\in \omega$, there exists a $k>p$ such that $x_k\notin K_j$. This allows us to choose 
indices $k_0<k_1<\dots$ so that $x_{k_j}\notin K_j$ for all $j$. Let $m\in\omega$ be such that $x\in K_m$, 
and let $A=\{x_{k_j}:j\geq m\}$. Then $x\notin A$, because $x_{k_j}\notin K_j\supset K_m$ for $j\geq m$, and 
$x\in \cl A$, because $x_{k_j}\to x$. On the other hand, $A\cap K_n\subset\{x_{k_j}:j<n\}$ is finite and hence 
closed in $K_n$ for each $n$; therefore, $A$ is closed in $Z$. This contradiction proves the lemma.
\end{proof}

\begin{proposition} 
\label{p3}
Suppose that a family $\mathscr F=\{F_n: n\in \omega\}$ of closed subsets of a topological space $X$ absorbs 
convergent sequences in~$X$ and $F_n\subset F_{n+1}$ for all $n\in\omega$.
\begin{enumerate}
\item[{\rm(i)}] 
If $X$ is $\varkappa$-Fr\'{e}chet--Urysohn, then $\bigcup_{n\in\omega}\Int F_n$ is dense in $X$.
\item[{\rm(ii)}] 
If $X$ is strongly $\varkappa$-Fr\'{e}chet--Urysohn, then $\bigcup_{n\in\omega}\Int F_n=X$.
\end{enumerate}
\end{proposition}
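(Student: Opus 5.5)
Both parts go by contradiction, using the open sets $U\setminus F_n$. For (ii), suppose some $x\in X$ lies in no $\Int F_n$. Put $U_n=X\setminus F_n$. Each $U_n$ is open because $F_n$ is closed, and $U_{n+1}\subset U_n$ because the $F_n$ increase. Since $x\notin\Int F_n$, every neighborhood of $x$ meets $X\setminus F_n$, so $x\in\bigcap_n\cl U_n$. By strong $\varkappa$-Fr\'echet--Urysohnness there is a sequence $x_n\to x$ with $x_n\in U_n$, that is, $x_n\notin F_n$. Since $\mathscr F$ absorbs convergent sequences, $\{x_n:n\in\omega\}\subset F_m$ for some $m$. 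But $x_m\notin F_m$, which is a contradiction.

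For (i), suppose the union $\bigcup_n\Int F_n$ is not dense. Then some nonempty open set $W$ misses every $\Int F_n$. The natural move is to run the argument of (ii) inside $W$. The obstacle is that mere $\varkappa$-Fr\'echet--Urysohnness gives a sequence in a single open set, not one in $U_n$ for each $n$. So I would encode all the $U_n$ into one open set and pick a point of $W$ in its closure. To do this, I would build by induction nonempty open sets $W\supset V_0\supset \cl V_1\supset V_1\supset\cl V_2\supset\dots$ with $V_{n+1}\cap F_n=\varnothing$ and $\cl V_{n+1}\subset V_n$. This is possible since $V_n\not\subset F_n$, because $V_n\subset W$ misses $\Int F_n$. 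We can then shrink $V_n\setminus F_n$ using regularity.

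The difficulty is that $\bigcap_n V_n$ may be empty, so there may be no point where the pieces accumulate. To avoid this, I would instead choose pairwise disjoint nonempty open sets $G_n\subset W\setminus F_n$ accumulating at a point. Concretely, I would try to arrange $G_n\subset V_n\setminus \cl V_{n+1}$, where the $V_n$ shrink and are chosen inside $W\setminus F_{n-1}$, and take $U=\bigcup_n G_n$. Alternatively, I would use the following cleaner route: take any point $p\in W$ and the open set $U=W\setminus F_0$, and argue with points in $\cl U$.

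Simplest version: fix $x\in W$. For each $n$ the open set $W\setminus F_n$ is nonempty and open. Actually I would look for a single point $x$ with $x\in\cl(W\setminus F_n)$ for all $n$. Any $x\in W$ works in a weak sense, but then a single sequence from $W\setminus F_0$ converging to $x$ could lie in $F_1$.

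So the real plan for (i) is a diagonal choice. Choose $x\in W$ and a point $y$ with a sequence structure forced by $\varkappa$-Fr\'echet--Urysohnness applied to the open set $U=\bigcup_n G_n$, where the $G_n\subset W\setminus F_n$ are disjoint open sets whose closure union is not closed. A limit point $z$ of $U$ outside every $\cl G_n$ yields a sequence in $U$ converging to $z$ that meets infinitely many $G_n$ only finitely often each. Its points then lie outside $F_n$ for unboundedly many $n$, contradicting absorption by the increasing family. I expect that constructing such $G_n$ together with the accumulation point $z$ is the main obstacle. I would first try to use that $W\setminus F_n$ is dense in $W$ for every $n$: pick a point $z\in W$, a decreasing base-like chain of neighborhoods $O_n$ of $z$ (any decreasing neighborhoods with $\bigcap$ avoiding nothing needed), and $G_n\subset O_n\setminus(F_n\cup \cl G_0\cup\dots\cup\cl G_{n-1}\cup\{z\})$. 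Then $z\in\cl U$ holds whenever the $O_n$ are chosen cofinal enough, and this is where I expect the most care is needed.
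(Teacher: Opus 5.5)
Your argument for (ii) is exactly the paper's. For (i) there is a genuine gap, and it is at the step you flagged yourself. Your endgame is correct and matches the paper's. Suppose the $G_n\subset W\setminus F_n$ are open, $z\in\cl{\bigcup_n G_n}$, and $z\notin\cl G_n$ for every $n$. Then a sequence in $\bigcup_n G_n$ converging to $z$ has only finitely many terms in $G_0\cup\dots\cup G_m$. Hence some term lies in $G_j\subset X\setminus F_j\subset X\setminus F_m$ with $j>m$, and absorption fails. But you never actually produce such $z$ and $G_n$. Choosing $G_n$ inside a decreasing chain of neighborhoods $O_n$ of a fixed $z$ guarantees $z\in\cl{\bigcup_n G_n}$ only when the $O_n$ form a local base at $z$. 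A $\varkappa$-Fr\'echet--Urysohn space need not have points of countable character: the compact group $2^{\omega_1}$ is $\varkappa$-Fr\'echet--Urysohn but has uncountable character everywhere. In such a space no countable chain is ``cofinal enough'', and nothing in your construction prevents all the $G_n$ from missing a fixed neighborhood of $z$.

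The missing idea is to obtain the countable accumulation structure from $\varkappa$-Fr\'echet--Urysohnness itself, by applying the property twice.

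First, $W$ has no isolated points. If $\{y\}$ were open for some $y\in W$, then $y\in F_n$ for some $n$ (because $\bigcup\mathscr F=X$), so $y\in\Int F_n$, contrary to the choice of $W$. Hence for any $y\in W$ the open set $W\setminus\{y\}$ has $y$ in its closure. The first application of the property gives a sequence $y_n\in W\setminus\{y\}$ with $y_n\to y$; this sequence replaces your chain $O_n$.

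Next, pick open sets $U_n$ with $y_n\in U_n\subset W$ and $y\notin\cl U_n$, which is possible by Hausdorffness, and set $G_n=U_n\setminus F_n$. Since $U_n$ misses $\Int F_n$, the set $G_n$ is dense in $U_n$. So $y_n\in\cl G_n$, and therefore $y\in\cl{\bigcup_n G_n}$, while $y\notin\cl G_n\subset\cl U_n$ for every $n$.

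Finally, apply the property a second time, to the open set $\bigcup_n G_n$, and your endgame finishes the proof. Disjointness of the $G_n$ is not needed.
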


\begin{proof}
(i)\enspace 
Suppose that, on the contrary, $Y=X\setminus \cl{\bigcup_{n\in\omega}\Int F_n}\neq\varnothing$. Note that 
$Y$ is open. No point $y\in Y$ is isolated in $Y$: otherwise, $\{y\}$ is open in $X$, and since $y\in F_n$ for 
some $n$, we have $y\in \Int F_n$. Take $y\in Y$. The set $Y\setminus\{y\}$ is open and $y\in 
\cl{Y\setminus\{y\}}$. Since $X$ is $\varkappa$-Fr\'{e}chet--Urysohn, there exists a sequence $(y_n)_{n\in 
\omega}\subset Y\setminus\{y\}$ converging to $y$. For each $n\in\omega$, choose an open set $U_n$ so that 
$y_n\in U_n\subset Y$ and $y\notin \cl U_n$. We set $V_n=U_n\setminus F_n$ and $V=\bigcup_{n\in\omega}V_n$. 
The open set $U_n$ does not meet $\Int F_n$; hence $V_n$ is dense in $U_n$ and $y_n\in \cl V_n$. Since 
$y_n\to y$, it follows that $y\in\cl V$, and since $X$ is $\varkappa$-Fr\'{e}chet--Urysohn, there exists a 
sequence $(x_k)_{k\in \omega}$, $x_k\in V$, converging to~$y$.

Let us show that $\{x_k:k\in\omega\}\not\subset F_m$ for every $m\in\omega$; this will contradict the assumption 
that $\mathscr F$ absorbs convergent sequences. Since $y\notin\cl{U_0\cup\dots\cup U_m}$, only finitely 
many $x_k$ belong to $U_0\cup\dots\cup U_m$. Take $k$ with $x_k\notin U_0\cup\dots\cup U_m$. Then $x_k\in V_j$ 
for some $j>m$, so that $x_k\notin F_j\supset F_m$.

(ii)\enspace 
Assume that, on the contrary, $Y=X\setminus \bigcup_{n\in\omega}\Int F_n\neq\varnothing$. Take $y\in Y$. Let 
$V_n=X\setminus F_n$ for $n\in \omega$. The open sets $V_n$ decrease, and $y\in \cl V_n$, because $y\notin 
\Int F_n$. Since $X$ is strongly $\varkappa$-Fr\'{e}chet--Urysohn, there exists a sequence $(x_n)_{n\in\omega}$ 
converging to $y$ and such that $x_n\in V_n$, i.e., $x_n\notin F_n$, for $n\in \omega$. The sequence $(x_n)_{n\in 
\omega}$ is not contained in any $F_n$, which contradicts the assumption that $\mathscr F$ absorbs convergent 
sequences. 
\end{proof}

Recall that a topological space $X$ is \emph{hemicompact} if $X = \bigcup_{n\in\omega}K_n$, where all $K_n$ are 
compact and, for every compact set $K \subset X$,  there is an $n\in\omega$ for which $K \subset K_n$.

\begin{corollary} 
\label{c1}
Let $X$ be a hemicompact space.
\begin{enumerate}
\item[{\rm(i)}] 
If $X$ is $\varkappa$-Fr\'{e}chet--Urysohn, then $X$ has a dense subset of points of local compactness.
\item[{\rm(ii)}] 
If $X$ is strongly $\varkappa$-Fr\'{e}chet--Urysohn, then $X$ is locally compact. 
\end{enumerate}
\end{corollary}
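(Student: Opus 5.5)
We apply Proposition~\ref{p3} to the family $\{K_n:n\in\omega\}$ witnessing hemicompactness. First we replace $K_n$ by $K_0\cup\dots\cup K_n$. These sets are still compact and cover $X$. Any compact set is still contained in one of them, and now $K_n\subset K_{n+1}$. Since $X$ is Tychonoff, hence Hausdorff, each $K_n$ is closed in $X$.

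Next we check that this increasing sequence absorbs convergent sequences in~$X$. Let $(x_k)_{k\in\omega}$ converge to $x$. Then $\{x\}\cup\{x_k:k\in\omega\}$ is compact. By hemicompactness it lies in some $K_n$, which is exactly what absorption requires. This is the only point where hemicompactness is used, and it is immediate.

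Now both parts follow. In~(i), $X$ is $\varkappa$-Fr\'echet--Urysohn, so Proposition~\ref{p3}(i) shows that $\bigcup_n\Int K_n$ is dense in~$X$. Take a point $y\in\Int K_n$. Then $K_n$ is a compact neighborhood of $y$, so $X$ is locally compact at~$y$. Hence the points of local compactness form a dense set.

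In~(ii), $X$ is strongly $\varkappa$-Fr\'echet--Urysohn, so Proposition~\ref{p3}(ii) gives $\bigcup_n\Int K_n=X$. Every point therefore has a compact neighborhood of the form $K_n$. In a Hausdorff space this is enough for local compactness, so $X$ is locally compact.

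There is no real obstacle here. The only care needed is to make the family increasing and closed, so that it meets the hypotheses of Proposition~\ref{p3}.
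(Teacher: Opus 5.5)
Your proof is correct and follows the paper's argument: make the $K_n$ increasing, note that a convergent sequence together with its limit is compact and so lies in some $K_n$, apply Proposition~\ref{p3}, and use that each point of $\Int K_n$ has the compact neighborhood $K_n$. You also state explicitly that the $K_n$ are closed because $X$ is Hausdorff, which the paper leaves implicit.
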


\begin{proof}
Replacing $K_n$ by $K_0\cup\dots\cup K_n$, we can assume that the $K_n$ increase. The family $\{K_n:n\in\omega\}$ 
absorbs convergent sequences, because any convergent sequence together with its limit is compact. It remains to 
apply Proposition~\ref{p3} and note that every point of $\Int K_n$ has the compact neighborhood $K_n$. 
\end{proof}

\begin{corollary} 
\label{c2}
Any hemicompact homogeneous $\varkappa$-Fr\'{e}chet--Urysohn \tolerance1500 space $X$ is locally compact.
\end{corollary}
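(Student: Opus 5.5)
This follows directly from Corollary~\ref{c1}(i) combined with homogeneity. Let $X$ be hemicompact, homogeneous, and $\varkappa$-Fr\'echet--Urysohn. By Corollary~\ref{c1}(i), the set of points at which $X$ is locally compact is dense in $X$. In particular it is nonempty as soon as $X\neq\varnothing$; the empty space is trivially locally compact. So there is a point $x_0\in X$ with a compact neighborhood $K$.

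Now take any $y\in X$. By homogeneity there is a homeomorphism $h\colon X\to X$ with $h(x_0)=y$. Since $h$ is a homeomorphism, $h(K)$ is compact and $h(\Int K)$ is an open set containing $y$. Hence $h(K)$ is a compact neighborhood of $y$. Therefore every point of $X$ has a compact neighborhood, that is, $X$ is locally compact.

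There is essentially no obstacle. The only thing to check is that local compactness at a point is a topological property transported by homeomorphisms, which is immediate. Note that we need only part~(i) of Corollary~\ref{c1}, not the strong version, since homogeneity upgrades ``one point of local compactness'' to ``all points''. For topological groups one could alternatively invoke Theorem~\ref{t1} and Corollary~\ref{c1}(ii), but the homogeneity argument covers all homogeneous spaces.
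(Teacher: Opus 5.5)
Your proof is correct and is the paper's argument: Corollary~\ref{c1}(i) gives a point of local compactness, and homogeneity moves its compact neighborhood to every other point. Your handling of the empty space and the explicit check that $h(K)$ is a compact neighborhood of $y$ are correct details that the paper leaves implicit.
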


\begin{proof}
By Corollary~\ref{c1}(i), $X$ has a point of local compactness, and by homogeneity all points of $X$ are points 
of local compactness. 
\end{proof}

In \cite{GOR2026kFUgr}, this corollary was proved for the case of  topological groups. In this case, the 
following two propositions also hold.

\begin{proposition} 
\label{prop:kfu-group-as}
Suppose that a family $\mathscr F=\{F_n: n\in \omega\}$ of closed subsets of a $\varkappa$-Fr\'{e}chet--Urysohn 
topological group $G$ absorbs convergent sequences in~$G$ and $F_n\subset F_{n+1}$ for all $n\in\omega$. Then 
$F_n$ is a neighborhood of the identity element of $G$ for some $n \in \omega$. 
\end{proposition}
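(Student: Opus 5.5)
By Theorem~\ref{t1}, the $\varkappa$-Fr\'echet--Urysohn group $G$ is strongly $\varkappa$-Fr\'echet--Urysohn. Proposition~\ref{p3}(ii) then gives $\bigcup_{n\in\omega}\Int F_n=G$. In particular the identity element $e$ lies in $\Int F_n$ for some $n$, so $F_n$ is a neighborhood of $e$. This would already complete the proof.

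The whole weight therefore rests on Theorem~\ref{t1}, which the paper treats as an immediate consequence of Proposition~\ref{p2} and \cite[Theorem~2.1]{GOR2026kFUgr}. That theorem presumably says that the product of a $\varkappa$-Fr\'echet--Urysohn group with a first-countable space is $\varkappa$-Fr\'echet--Urysohn. Combined with (ii)$\Rightarrow$(i) of Proposition~\ref{p2}, this yields the strong property.

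If one wanted to avoid Theorem~\ref{t1} and use only Proposition~\ref{p3}(i), I would argue as follows. Part~(i) gives $\bigcup_n\Int F_n$ dense, so some $F_m$ has nonempty interior; say $g\in\Int F_m$. Then $g^{-1}F_m$ is a neighborhood of $e$, but the statement asks for some $F_n$ itself to be a neighborhood of $e$. To get this, replace the family by $F'_n=F_nF_n^{-1}$ (or $F_n^{-1}F_n$). The sets $F'_n$ contain the neighborhood $g^{-1}F_m$, but they need not be closed, and it is unclear that they are contained in some later $F_k$. This translation step is the main obstacle, and it is why the route through Theorem~\ref{t1} and part~(ii) is preferable: it puts $e$ directly into some $\Int F_n$ with no algebraic manipulation.
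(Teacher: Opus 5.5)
Your proof is correct and is exactly the paper's argument: Theorem~\ref{t1} makes $G$ strongly $\varkappa$-Fr\'echet--Urysohn, and Proposition~\ref{p3}(ii) then places the identity element in some $\Int F_n$. The detour through Proposition~\ref{p3}(i) is unnecessary and can be dropped.
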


\begin{proof}
According to Theorem~\ref{t1}, $G$ is strongly $\varkappa$-Fr\'{e}chet--Urysohn. \tolerance1000
Therefore, $\bigcup_{n\in\omega}\Int F_n=G$ by Proposition~\ref{p3}(ii); in particular, the identity 
element belongs to some $\Int F_n$.
\end{proof}

\begin{proposition} 
\label{prop:kfu-group-loc-precomp}
Suppose that a family $\mathscr F=\{F_n: n\in \omega\}$ of closed precompact subsets of a topological group $G$ 
absorbs convergent sequences in~$G$. Then any $\varkappa$-Fr\'{e}chet--Urysohn subgroup of $G$ is locally 
precompact. 
\end{proposition}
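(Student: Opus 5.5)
Let $H$ be a $\varkappa$-Fr\'echet--Urysohn subgroup of $G$. The plan is to reduce the statement to Proposition~\ref{prop:kfu-group-as}: we produce an increasing countable family of closed subsets of $H$ that absorbs convergent sequences in $H$ and whose members are precompact. Proposition~\ref{prop:kfu-group-as} will then give a member that is a neighborhood of the identity of $H$, and that neighborhood is precompact, so $H$ is locally precompact.

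The family $\mathscr F$ need not be increasing, so we first pass to finite unions. Set $F'_n=F_0\cup\dots\cup F_n$. Finite unions of closed sets are closed, and finite unions of precompact sets are precompact: if $F_i\subset M_iO$ for finite sets $M_i$, then $F'_n\subset(M_0\cup\dots\cup M_n)O$. Each $F_n$ is contained in $F'_n$, so the family $\{F'_n:n\in\omega\}$ still absorbs convergent sequences in $G$.

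Next we restrict to $H$ by putting $E_n=F'_n\cap H$. These sets are closed in $H$ and increase with $n$. By Remark~\ref{rem:finer}, the family $\{E_n:n\in\omega\}$ absorbs convergent sequences in $H$. Each $E_n$ is a subset of the precompact set $F'_n$, hence is precompact in $G$. By the remark in the preliminaries, a subset of $H$ is precompact in $H$ exactly when it is precompact in $G$, so each $E_n$ is precompact in $H$.

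Since $H$ is a $\varkappa$-Fr\'echet--Urysohn topological group, Proposition~\ref{prop:kfu-group-as} applies to $\{E_n\}$ in $H$. It yields an $n$ such that $E_n$ is a neighborhood of the identity of $H$, and $E_n$ is precompact, which is what we need. The argument has no real obstacle. The only points needing care are closing $\mathscr F$ under finite unions so that it becomes increasing, and the transfer of precompactness from $G$ to $H$, which the preliminaries already provide.
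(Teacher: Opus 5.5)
Your proposal is correct and follows the paper's proof essentially verbatim. The paper also sets $P_n=(F_0\cup\dots\cup F_n)\cap H$, uses Remark~\ref{rem:finer} for absorption and the preliminaries for precompactness in $H$, and then applies Proposition~\ref{prop:kfu-group-as}.
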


\begin{proof}
Let $H$ be a $\varkappa$-Fr\'{e}chet--Urysohn subgroup of $G$, and let $P_n = (F_0\cup\dots\cup F_n) \cap H$ for 
$n \in \omega$. Then the increasing family $\mathscr{P} = \{P_n : n \in \omega\}$ consists of closed subsets of 
$H$ precompact in $G$ and hence in $H$, and $\mathscr{P}$ absorbs convergent sequences in $H$ (see 
Remark~\ref{rem:finer}). By Proposition~\ref{prop:kfu-group-as} one of the precompact sets $P_n$ is a 
neighborhood of the identity element of $H$. Therefore, $H$ is locally precompact.
\end{proof}

\section{Free groups in varieties of abstract groups}
\label{sec:free_abstract_groups}

We refer to groups without topology as \emph{abstract groups}. In this section, we consider only abstract groups and 
simply call them groups. By $e$ we always denote the identity element of a group under consideration.

A class $\mathscr{V}$ of groups is called a \emph{variety} if it satisfies the following conditions:
\begin{enumerate}
\item[$(\mathscr{V}_0)$]
if $G\in \mathscr{V}$ and a group $H$ is isomorphic to $G$, then $H\in \mathscr{V}$;
\item[$(\mathscr{V}_1)$]
any product of groups in $\mathscr{V}$ belongs to $\mathscr{V}$;
\item[$(\mathscr{V}_2)$]
any subgroup of a group in $\mathscr{V}$ belongs to $\mathscr{V}$;
\item[$(\mathscr{V}_3)$]
any quotient of  a group in $\mathscr{V}$ belongs to $\mathscr{V}$.
\end{enumerate}
In what follows, we assume all varieties under considerations to be \emph{nontrivial}, i.e., contain 
some nontrivial groups.

By Birkhoff's theorem \cite{Birkhoff}, a class $\mathcal K$ of groups is a variety if and only if 
there exists a set of identities such that $\mathcal K$ consists of precisely those groups in which all 
identities in this set hold. Examples of varieties are the classes of all groups, of Abelian groups, 
and of Boolean groups.

As is customary in group theory, by a \emph{word} in an arbitrary subset $X$ of a group $G$ we mean any written 
group product of elements of $X$ and their inverses; the number of factors is called the \emph{length} of the 
word. Given an $x\in \langle X\rangle_G$, we define the \emph{length of $x$ relative to $X$} as the minimum length 
of a word in $X$ representing $x$, that is, 
\begin{align*}
\min\{n\in\omega: {}&\text{there exist $x_1,\dots,x_n\in X$ and $\varepsilon_1,\dots,\varepsilon_n\in\{-1,1\}$}
\\
&\text{for which $x=x_1^{\varepsilon_1}\dots x_n^{\varepsilon_n}$}\}.
\end{align*}
Clearly, for any $x,y\in \langle X\rangle_G$, the length of the product $xy$ does not exceed the sum of the lengths 
of $x$ and~$y$.

The \emph{free group} $F^{\mathscr{V}}(X)$ on a nonempty set $X$ in a variety $\mathscr{V}$ is defined by the 
following conditions: 
\begin{enumerate}
\item[$(\mathscr{F}_1)$]
$F^{\mathscr{V}}(X)\in \mathscr{V}$;
\item[$(\mathscr{F}_2)$]
$F^{\mathscr{V}}(X)$ is generated by $X$; 
\item[$(\mathscr{F}_3)$]
any map $f\colon X\to G$, where $G\in \mathscr{V}$, extends to a homomorphism $\hat f\colon F^{\mathscr{V}}(X)\to G$ 
(which is unique by $(\mathscr{F}_2)$). 
\end{enumerate}
Since $\mathscr{V}$ is nontrivial, the elements of $X$ are pairwise distinct and different from $e$ in 
$F^{\mathscr{V}}(X)$: given a nontrivial $G\in\mathscr{V}$, a $g\in G\setminus\{e\}$, and $x\in X$, the map 
sending $x$ to $g$ and all other points of $X$ to $e$ extends to a homomorphism separating $x$ from $e$ and from 
all $y\in X\setminus\{x\}$. Note that conditions $(\mathscr{F}_1)$ and $(\mathscr{F}_3)$ imply that any map 
$f\colon X\to F^{\mathscr{V}}(X)$ extends to a homomorphism $\hat f\colon F^{\mathscr{V}}(X)\to 
F^{\mathscr{V}}(X)$.

The free  group $F^{\mathscr{V}}(X)$ on a set $X$ in a variety $\mathscr{V}$ is the union $\bigcup_{n\in 
\omega}F_n^{\mathscr{V}}(X)$, where each $F_n^{\mathscr{V}}(X)$ is the subset of $F^{\mathscr{V}}(X)$ consisting 
of all elements represented by words of length at most $n$ in~$X$. For $A\subset X$, we identify $\langle 
A\rangle_{F^\mathscr{V}(X)}$ with $F^\mathscr{V}(A)$ (clearly, $\langle A\rangle_{F^\mathscr{V}(X)}$ satisfies 
conditions $(\mathscr{F}_1)$--$(\mathscr{F}_3)$ with $X$ replaced by~$A$).

\begin{definition}
Given a variety $\mathscr{V}$, a set $X$, and $g\in F^\mathscr{V}(X)$, we put  
$$
\supp g=\bigcap \{A\subset X: g\in \langle A\rangle\}. 
$$
We refer to $\supp g$ as the \emph{support} of $g$ in~$F^\mathscr{V}(X)$.
\end{definition}

\begin{proposition}
\label{prop:property_afg}
For any variety $\mathscr{V}$, set $X$, and $g\in F^\mathscr{V}(X)$, the following assertions hold: 
\begin{enumerate}
\item[\rm (i)]
$g\in \langle\supp g\rangle$;
\item[\rm (ii)]
if $A\subset X$ and $g\in\langle  A\rangle$, then $\supp g\subset A$;
\item[\rm (iii)]
if $n$ is the length of $g$ in $F^\mathscr{V}(X)$ and $g=x_1^{\varepsilon_1}x_2^{\varepsilon_2}\dots x_n^{\varepsilon_n}$ 
for some $x_1,\dots,x_n\in X$ and $\varepsilon_1,\dots,\varepsilon_n\in \{-1,1\}$, then $\supp g=\{x_1,\dots,x_n\}$.
\end{enumerate}
\end{proposition}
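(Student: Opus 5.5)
The key fact to establish first is that the family of subgroups containing $g$ and generated by subsets of $X$ is closed under finite intersections. Precisely: if $A,B\subset X$ and $g\in\langle A\rangle\cap\langle B\rangle$, then $g\in\langle A\cap B\rangle$. To show this, I will use retractions given by the universal property $(\mathscr F_3)$. Let $\varphi\colon F^\mathscr{V}(X)\to F^\mathscr{V}(X)$ be the homomorphism extending the map that is the identity on $A$ and sends $X\setminus A$ to $e$. Then $\varphi$ fixes $\langle A\rangle$ pointwise, so $\varphi(g)=g$. Now write $g$ as a word in $B$. Applying $\varphi$ to this word kills the letters in $B\setminus A$, which gives $g=\varphi(g)\in\langle A\cap B\rangle$.

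Next comes the passage from finite to arbitrary intersections. Since $g$ is a finite word, it lies in $\langle A_0\rangle$ for some finite $A_0\subset X$. For any $A$ with $g\in\langle A\rangle$, the set $A\cap A_0$ also has $g$ in its generated subgroup, and $A\cap A_0$ is a subset of the finite set $A_0$. Hence the intersection defining $\supp g$ equals the intersection of finitely many sets $A\cap A_0$. Applying the finite-intersection fact repeatedly gives $g\in\langle\supp g\rangle$, which is (i). Assertion (ii) is immediate from the definition as an intersection.

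For (iii), the inclusion $\supp g\subset\{x_1,\dots,x_n\}$ follows from (ii). For the reverse inclusion, suppose some $x_i\notin\supp g$. By (i), $g$ lies in $\langle\supp g\rangle$, hence in $\langle X\setminus\{x_i\}\rangle$. Let $\psi$ be the homomorphism extending the map that sends $x_i$ to $e$ and fixes every other point of $X$. Then $\psi$ fixes $\langle X\setminus\{x_i\}\rangle$ pointwise, so $g=\psi(g)$. On the other hand, applying $\psi$ to the given word for $g$ deletes every occurrence of $x_i$ (there is at least one). The result is a word for $g$ of length less than $n$, contradicting the minimality of $n$.

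The main obstacle is the finite-intersection step. The rest of the argument is bookkeeping, but that step depends on checking that the retraction homomorphisms exist inside the variety. They do, because $(\mathscr F_3)$ applied with $G=F^\mathscr{V}(X)$ is exactly the remark made in the paper after the definition of the free group.
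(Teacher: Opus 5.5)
Your proposal is correct and follows essentially the same route as the paper. The paper likewise proves $\langle A\rangle\cap\langle B\rangle=\langle A\cap B\rangle$ via a retraction homomorphism obtained from $(\mathscr{F}_3)$, reduces the intersection defining $\supp g$ to a finite one by intersecting with the finite letter set of a word for $g$, and proves (iii) by applying a retraction to a minimal-length word to produce a shorter one. The only differences are cosmetic: you use an arbitrary finite $A_0$, and you kill a single letter $x_i$ rather than all letters outside $\supp g$.
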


To prove the proposition, we need the following lemma.

\begin{lemma}
\label{lemma:afg-intersection}
If ${\mathscr{V}}$ is a variety of abstract groups, $X$ a nonempty set, and $F^{\mathscr{V}}(X)$ 
the free group on $X$ in $\mathscr{V}$, then 
$\langle A\rangle \cap \langle B\rangle =\langle A\cap B\rangle$ for any $A,B\subset X$.
\end{lemma}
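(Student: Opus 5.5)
The inclusion $\langle A\cap B\rangle\subset\langle A\rangle\cap\langle B\rangle$ is trivial, so the whole task is the reverse inclusion. I would prove it with a retraction homomorphism supplied by the universal property $(\mathscr{F}_3)$, which, as noted above, allows any map $X\to F^{\mathscr{V}}(X)$ to be extended to an endomorphism.

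First dispose of the degenerate case $A\cap B=\varnothing$, where $\langle A\cap B\rangle=\{e\}$. Define $f\colon X\to F^{\mathscr{V}}(X)$ by $f(x)=x$ for $x\in A$ and $f(x)=e$ for $x\notin A$, and let $\hat f$ be its homomorphic extension. Then $\hat f$ is the identity on $\langle A\rangle$, since it fixes the generators $A$. Also, $\hat f$ sends every $x\in B$ to $e$, because $B\cap A=\varnothing$, so $\hat f(\langle B\rangle)=\{e\}$. Hence for $g\in\langle A\rangle\cap\langle B\rangle$ we get $g=\hat f(g)=e$.

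In the general case $A\cap B\neq\varnothing$, fix a point $c\in A\cap B$. The aim is a map that kills $X\setminus A$ while fixing $A$, so instead I would use a retraction onto $\langle A\cap B\rangle$ defined on $B$:
\[
f(x)=x \text{ for } x\in A\cap B,\qquad f(x)=e \text{ for } x\in X\setminus (A\cap B).
\]
Let $\hat f$ be its extension to an endomorphism of $F^{\mathscr{V}}(X)$. This map alone does not fix $\langle A\rangle$, so I would instead use two maps and compose them:
\begin{itemize}
\item $\hat f_A$ is the extension of the map that is the identity on $A$ and sends $X\setminus A$ to $e$; it is the identity on $\langle A\rangle$.
\item $\hat f_B$ is defined the same way with $B$ in place of $A$.
\end{itemize}
For $g\in\langle A\rangle\cap\langle B\rangle$ we have $g=\hat f_A(g)$ and $g=\hat f_B(g)$. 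Write $g$ as a word $w(b_1,\dots,b_k)$ with $b_i\in B$. Then
\[
g=\hat f_A(g)=w\bigl(\hat f_A(b_1),\dots,\hat f_A(b_k)\bigr).
\]
Each $\hat f_A(b_i)$ is either $b_i$, when $b_i\in A\cap B$, or $e$. So $g$ is a word in elements of $A\cap B$, that is, $g\in\langle A\cap B\rangle$. Only $\hat f_A$ and the representation of $g$ over $B$ are actually used.

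This argument needs no case split, since sending points to $e$ is always allowed. The only point requiring care is that the extension $\hat f_A$ exists as a map into $F^{\mathscr{V}}(X)$ itself, which holds because $F^{\mathscr{V}}(X)\in\mathscr{V}$. I do not expect a real obstacle here. The lemma is essentially this one-line retraction argument, and the step "$\hat f_A$ fixes $\langle A\rangle$" is immediate from uniqueness on generators.
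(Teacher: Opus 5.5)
Your final argument is correct and is the paper's proof with the roles of $A$ and $B$ interchanged. The paper writes $g$ as a word in $A$ and applies the endomorphism that fixes $B$ and kills $X\setminus B$, whereas you write $g$ as a word in $B$ and apply $\hat f_A$; the separate case $A\cap B=\varnothing$, the point $c$, and the abandoned retraction onto $\langle A\cap B\rangle$ are superfluous and can be deleted.
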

\begin{proof}
It suffices to check that $\langle A\rangle \cap \langle B\rangle \subset \langle A\cap B\rangle$. Take 
$g\in \langle A\rangle \cap \langle B\rangle$; we have $g = 
a_1^{\varepsilon_1}\dots a_n^{\varepsilon_n}$, where $n\in \omega$, $a_i\in A$, 
$\varepsilon_i\in \{-1,1\}$ for $i\le n$.
Let $f\colon X\to F^{\mathscr{V}}(X)$ be the map defined by setting $f(x)=x$ for $x\in B$ and $f(x)=e$ 
for $x\notin B$, and let $\hat f\colon F^\mathscr{V}(X)\to F^\mathscr{V}(X)$ be the extension of $f$ to a homomorphism. 
Since $g\in \langle B\rangle$, we have $\hat f(g)=g$. 
Let us number the indices $k\le n$ for which $a_k\in B$ in increasing order: 
$$
\{k\in\{1,\dots,n\}: a_k\in B\}=\{k_1,\dots,k_m\}.
$$
Then
\[
g=\hat f(g)=f(a_1)^{\varepsilon_1}f(a_2)^{\varepsilon_2}\dots f(a_n)^{\varepsilon_n}=a_{k_1}^{\varepsilon_{k_1}}\dots a_{k_m}^{\varepsilon_{k_m}},
\]
where all $a_{k_i}$ belong to $A\cap B$. Therefore, $g\in \langle A\cap B\rangle$.
\end{proof}

\begin{proof}[Proof of Proposition \ref{prop:property_afg}]
The case where $g$ is the identity element of $F^\mathscr{V}(X)$ is trivial. In what follows, we assume that 
$g\ne e$. 

Let $n$ be the length of $g$ in $F^\mathscr{V}(X)$. Then $g=x_1^{\varepsilon_1}x_2^{\varepsilon_2}\dots x_n^{\varepsilon_n}$ 
for some $x_1,\dots,x_n\in X$ and $\varepsilon_1,\dots,\varepsilon_n\in \{-1,1\}$.
We set $\mathscr{A}=\{A\subset X: g\in \langle A\rangle\}$ and $S=\{x_1,\dots,x_n\}$. Then $S\in \mathscr{A}$ and $|S|\leq n$.

(i)\enspace
According to Lemma~\ref{lemma:afg-intersection}, the family $\mathscr{A}$ is closed under finite intersections. 
Let $\mathscr{A}'=\{A\cap S: A\in \mathscr{A}\}$. 
As mentioned, $S\in \mathscr{A}$; consequently, $\mathscr{A}'\subset \mathscr{A}$ and $\supp g=\bigcap \mathscr{A}'$. 
Since the family $\mathscr{A}'$ is finite (because so is $S$) 
and $\mathscr{A}$ is closed under finite intersections, we have $\supp g\in \mathscr{A}$, that is, $g\in \langle \supp g\rangle$.

Assertion (ii) follows from the definition of $\supp g$.

(iii)\enspace
We must show that $\supp g=S$. Clearly, $\supp g\subset S$. 
As in the proof of Lemma~\ref{lemma:afg-intersection}, we 
number the indices $k\le n$ for which $x_k\in \supp g$ in increasing order: 
$$
\{k\in \{1,\dots,n\}: x_k\in \supp g\}=\{k_1,\dots,k_m\}.
$$
Note that $\supp g\ne S$ if and only if $m<n$.  
Let $f\colon X\to F^\mathscr{V}(X)$ be the map defined by setting $f(x)=x$ for $x\in\supp g$ and $f(x)=e$ 
for $x\notin\supp g$, and let $\hat f\colon F^\mathscr{V}(X)\to F^\mathscr{V}(X)$ be the extension of $f$ to a homomorphism. 
Since $g\in \langle \supp g\rangle$, we have $\hat f(g)=g$. Applying $\hat f$ to 
$g=x_1^{\varepsilon_1}\dots x_n^{\varepsilon_n}$, we obtain the product 
$$
f(x_1)^{\varepsilon_1}f(x_2)^{\varepsilon_2}\dots f(x_n)^{\varepsilon_n} = x_{k_1}^{\varepsilon_{k_1}}\dots x_{k_m}^{\varepsilon_{k_m}}.
$$ 
If $m<n$, then the length of $g$ is strictly less than $n$, which contradicts our assumption. Therefore, 
$m=n$ and $\supp g= S$.
\end{proof}

\begin{remark}
\label{rem:supp}
The support of an element $g$ in a free, free Abelian, or free Boolean group is simply the set of letters of the 
reduced word representing~$g$. The definition of $\supp g$ for an arbitrary variety of groups and its properties 
established in Lemma~\ref{lemma:afg-intersection} and Proposition~\ref{prop:property_afg} are very natural and 
may be folklore. The notion of support is closely related to that of an essential variable for an element of 
a universal algebra (see, e.g.,~\cite{Salomaa}). 
\end{remark}

We remind the reader that, given sets $X$ and $A\subset X$, we identify $\langle A\rangle$ with $F^{\mathscr{V}}(A)$.

\begin{proposition}
\label{prop:afgk}
Let $\mathscr{V}$ be a variety and $X$ be a set. Then, for any $k\in \omega$ and $A\subset X$,
\begin{enumerate}
\item[\rm (i)]
$\langle A\rangle \cap F_k^{\mathscr{V}}(X)=F_k^{\mathscr{V}}(A)$;
\item[\rm (ii)]
if $A$ is finite, then so is $\langle A\rangle \cap F_k^{\mathscr{V}}(X)$.
\end{enumerate}
\end{proposition}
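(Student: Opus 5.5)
For (i), the inclusion $F_k^{\mathscr{V}}(A)\subset \langle A\rangle \cap F_k^{\mathscr{V}}(X)$ is immediate. A word of length at most $k$ in $A$ is a word of length at most $k$ in $X$, and it lies in $\langle A\rangle$.

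For the reverse inclusion, I would take $g\in \langle A\rangle\cap F_k^{\mathscr{V}}(X)$ and let $n\le k$ be the length of $g$ in $F^{\mathscr{V}}(X)$. Write $g=x_1^{\varepsilon_1}\dots x_n^{\varepsilon_n}$ with $x_i\in X$. By Proposition~\ref{prop:property_afg}(iii), $\supp g=\{x_1,\dots,x_n\}$. Since $g\in\langle A\rangle$, Proposition~\ref{prop:property_afg}(ii) gives $\supp g\subset A$. Hence all $x_i$ lie in $A$, and $g$ is represented by a word of length $n\le k$ in $A$, so $g\in F_k^{\mathscr{V}}(A)$.

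The identification of $\langle A\rangle$ with $F^{\mathscr{V}}(A)$ ensures that $F_k^{\mathscr{V}}(A)$, defined as the set of elements given by words of length at most $k$ in $A$, is exactly this set inside $F^{\mathscr{V}}(X)$. No further work is needed there. The one step that carries the content is applying (iii) to a \emph{minimal-length} representation. An arbitrary representation of length $\le k$ might use letters outside $A$, and passing to minimal length is what forces those letters into the support.

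For (ii), assume $A$ is finite. By (i), $\langle A\rangle\cap F_k^{\mathscr{V}}(X)=F_k^{\mathscr{V}}(A)$. This set is the image of the finite set of formal words of length at most $k$ in the letters of $A$ with exponents $\pm1$. There are at most $\sum_{j\le k}(2|A|)^j$ such words, so the set is finite.
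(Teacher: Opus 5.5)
Your proposal is correct and follows the paper's proof essentially verbatim: for (i) you take $g$ of length $n\le k$, use Proposition~\ref{prop:property_afg}(ii) to get $\supp g\subset A$ and (iii) on a minimal-length word to conclude that all its letters lie in $A$. For (ii) you reduce to finiteness of $F_k^{\mathscr{V}}(A)$, where your explicit count of words just fills in what the paper calls obvious.
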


\begin{proof}
(i)\enspace
Clearly, $F_k^{\mathscr{V}}(A)\subset \langle A\rangle \cap F_k^{\mathscr{V}}(X)$. 
Let us prove the reverse inclusion. Let $g\in \langle A\rangle \cap F_k^{\mathscr{V}}(X)$, and let 
$n$ be the length of $g$ in $F^{\mathscr{V}}(X)$. Then $n\leq k$.
It follows from Proposition~\ref{prop:property_afg}(ii) that $\supp g\subset A$.
Let $g=x_1^{\varepsilon_1}\dots x_n^{\varepsilon_n}$ be a representation of $g$ by a word of length $n$. By 
Proposition~\ref{prop:property_afg}(iii), $\{x_1,\dots,x_n\}=\supp g\subset A$; thus, this is a word in $A$, 
and the length of $g$ in $F^{\mathscr{V}}(A)$ does not exceed~$n$.
Therefore, $g\in F_k^{\mathscr{V}}(A)$.

(ii)\enspace 
The set $F_k^{\mathscr{V}}(A)$ is obviously finite for finite~$A$.
\end{proof}

\begin{proposition}
\label{prop:afgn-finete}
Let $\mathscr{V}$ be a variety and $X$ be a  set.
If $M\subset F^{\mathscr{V}}(X)$ is infinite, then $M^{n+1}\not\subset F_n^{\mathscr{V}}(X)$ for each $n\in\omega$.
\end{proposition}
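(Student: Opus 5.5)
The plan is to find, in every case, explicit elements $g_1,\dots,g_{n+1}\in M$ whose product has length at least $n+1$. There are two tools. The first is subadditivity of length. The second is Proposition~\ref{prop:property_afg}(iii): an element $p$ with $|\supp p|\ge n+1$ has length at least $n+1$, since a shortest word for $p$ has exactly $\supp p$ as its set of letters. Recall that $M^{n+1}$ consists of products of $n+1$ elements of $M$, and repetitions are allowed.

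\emph{Key observation.} Let $p=g_1\cdots g_{n+1}$ with all $g_i\in M$, and let $S=\bigcup_i\supp g_i$. Suppose $x\in\supp g_i$ but $x\notin\supp g_j$ for every $j\ne i$. Then $x\in\supp p$. To see this, suppose instead that $x\notin\supp p$. By Proposition~\ref{prop:property_afg}(i), $p$ and all $g_j$ with $j\ne i$ lie in $\langle S\setminus\{x\}\rangle$. Hence
\[
g_i=(g_1\cdots g_{i-1})^{-1}\,p\,(g_{i+1}\cdots g_{n+1})^{-1}\in\langle S\setminus\{x\}\rangle .
\]
Proposition~\ref{prop:property_afg}(ii) then gives $\supp g_i\subset S\setminus\{x\}$, a contradiction. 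So every ``private'' letter of a factor survives in the support of the product.

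\emph{Case 1: the lengths of elements of $M$ are unbounded.} Fix $h\in M$ of length $\ell$. Choose $g\in M$ of length greater than $n\ell+n$, and put $p=h^ng\in M^{n+1}$. Since $g=h^{-n}p$, subadditivity gives $\operatorname{len}(g)\le n\ell+\operatorname{len}(p)$. Therefore $\operatorname{len}(p)>n$, i.e.\ $p\notin F_n^{\mathscr V}(X)$.

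\emph{Case 2: $M\subset F_k^{\mathscr V}(X)$ for some $k$.} Then every support has size at most $k$. If only finitely many distinct supports occurred, $M$ would lie in $\langle A\rangle\cap F_k^{\mathscr V}(X)$ for a finite set $A$, which is finite by Proposition~\ref{prop:afgk}(ii). Since $M$ is infinite, there are infinitely many distinct supports. Apply Proposition~\ref{prop:Delta-system} to this family of supports. It yields a root $R$ and infinitely many elements $g_1,g_2,\dots\in M$ whose supports pairwise intersect exactly in $R$, each with some point $x_i\in\supp g_i\setminus R$. Each $x_i$ is private to $g_i$ among $g_1,\dots,g_{n+1}$. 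By the key observation, $p=g_1\cdots g_{n+1}$ has $\{x_1,\dots,x_{n+1}\}\subset\supp p$, so $|\supp p|\ge n+1$. By Proposition~\ref{prop:property_afg}(iii), $p\notin F_n^{\mathscr V}(X)$.

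The main obstacle is that arbitrary cancellation can occur in a general variety; for example, $g^{n+1}$ may be $e$ in a Boolean group. This is why I avoid naive choices such as powers of a single element. Cancellation is controlled through the key observation on private letters in Case~2, and through the length inequality $\operatorname{len}(g)\le n\ell+\operatorname{len}(p)$ in Case~1.
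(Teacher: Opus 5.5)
Your proof is correct and is essentially the paper's own argument: in the unbounded case you use subadditivity of length (with $h^n g$ where the paper uses $hg^n$), and in the bounded case you use Proposition~\ref{prop:afgk}(ii), the $\Delta$-system lemma, and the fact that a letter private to one factor survives in the support of the product. One tiny point: to place the product $p$ in $\langle S\setminus\{x\}\rangle$ you need part~(ii) as well as part~(i), because $p\in\langle S\rangle$ gives $\supp p\subset S$; the paper avoids this by bounding $\supp g_i$ directly by the union of the supports of the product and of the other factors.
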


\begin{proof}
For $g\in F^{\mathscr{V}}(X)$, let $l(g)$ denote the length of $g$ in $F^{\mathscr{V}}(X)$. By 
Proposition~\ref{prop:property_afg}(iii), $|\supp g|\le l(g)$; by Proposition~\ref{prop:property_afg}(i) 
and~(ii), $\supp(gh)\subset\supp g\cup\supp h$ and $\supp g^{-1}=\supp g$ for $g,h\in F^{\mathscr{V}}(X)$.
Consider two cases.

First, suppose that the set $\{l(g): g\in M\}$ is unbounded. Let $g\in M$. Choose an $h\in M$ so that $l(h)> n 
l(g) + n$. Since $h=h g^n g^{-n}$ and $n l(g) + n<l(h)\leq l(h g^n) + l(g^{-n})\leq l(h g^n) + n l(g)$, we have 
$l(h g^n)>n$, that is, $h g^n\notin F_n^{\mathscr{V}}(X)$. It remains to note that $h g^n\in M^{n+1}$.

Now suppose that the set $\{l(g): g\in M\}$ is bounded, that is, $M\subset F_k^{\mathscr{V}}(X)$ for some $k\in 
\omega$. Let $U=\bigcup\{\supp g: g\in M\}$. Proposition \ref{prop:property_afg}(i) implies the inclusion 
$M\subset \langle U\rangle$. Since $M$ is infinite and $M\subset \langle U\rangle\cap F_k^{\mathscr{V}}(X)$, it 
follows that $\langle U\rangle\cap F_k^{\mathscr{V}}(X)$ is infinite. By Proposition~\ref{prop:afgk}(ii) the set 
$U$ is infinite. Therefore, so is the family $\{\supp g: g\in M\}$. Since $|\supp g|\le l(g)\le k$ for every 
$g\in M$, Proposition~\ref{prop:Delta-system} implies the existence of a finite set $D\subset X$ and elements 
$g_m\in M$, $m\in\omega$, with pairwise distinct supports such that $\supp g_i\cap \supp g_j=D$ for any distinct 
$i,j\in\omega$ and $\supp g_i\setminus D\ne\varnothing$ for all $i$. Let \[ g=g_0 g_1\dots g_n. \] Then $g\in 
M^{n+1}$.

Let us show that $g\notin F_n^{\mathscr{V}}(X)$. In view of Proposition~\ref{prop:property_afg}(iii), it suffices 
to prove that $|\supp g|>n$. For each $i=0,1,\dots,n$, take an $x_i\in \supp g_i\setminus D$. We claim that $x_i 
\in \supp g$ for every $i=0,1,\dots,n$. Indeed, note that $x_i\notin \supp g_j$ for $j\neq i$. We have $g=p g_i 
q$, where $p=g_0 g_1\dots g_{i-1}$ and $q=g_{i+1}\dots g_{n}$. Since $\supp p\subset \bigcup_{j=0}^{i-1}\supp 
g_j$ and $\supp q\subset \bigcup_{j=i+1}^{n}\supp g_j$, it follows that $x_i\notin \supp p$ and $x_i\notin \supp 
q$. We have 
$$
x_i\in\supp g_i= \supp (p^{-1} g q^{-1}) \subset \supp p \cup \supp g \cup \supp q
$$
and $x_i\notin \supp p\cup \supp q$; therefore, $x_i\in\supp g$.

Since $x_i\neq x_j$ for different $i$ and $j$, it follows that $|\supp g|>n$.
\end{proof}

\section{Baire and $\varkappa$-Fr\'echet--Urysohn subgroups\\ of free topological groups}
\label{sec:free_top_groups}

In this section, we turn from abstract groups to topological ones.

Given a topological group $G$, by $\abst G$ we denote the abstract group underlying $G$, that is, the group $G$ 
without topology. Let $\abst{\mathscr{V}}$ be any nontrivial variety of abstract groups, and let $\mathscr{V}$ be 
the variety of all topological groups $G$ with $\abst G\in \abst{\mathscr{V}}$; such varieties of topological 
groups are said to be \emph{full}~\cite{MorrisII}. The \emph{free topological group} on a topological space $X$ 
in $\mathscr{V}$ is a topological group $F^\mathscr{V}(X)$ together with a continuous map $\sigma\colon X\to 
F^\mathscr{V}(X)$ such that 
\begin{enumerate}
\item[$(\mathscr{F\!T}_1)$]
$F^{\mathscr{V}}(X)\in \mathscr{V}$;
\item[$(\mathscr{F\!T}_2)$]
$F^{\mathscr{V}}(X)$ is algebraically generated by $\sigma(X)$; 
\item[$(\mathscr{F\!T}_3)$]
for every continuous map $f\colon X\to G$, where $G\in \mathscr{V}$, there exists a continuous homomorphism $\hat 
f\colon F^{\mathscr{V}}(X)\to G$ for which $f=\hat f\circ\sigma$ (it is unique by $(\mathscr{F\!T}_2)$). 
\end{enumerate}
It is well known that, for any topological space $X$, the free topological group $F^\mathscr{V}(X)$ in 
$\mathscr{V}$ exists and is unique up to topological isomorphism (see, e.g., \cite{Taylor}); this is proved by a 
standard argument based on the diagonal of a set of continuous maps from $X$ to topological groups in 
$\mathscr{V}$. Moreover, $F^\mathscr{V}(X)$ is isomorphic to the abstract free group $F^{\hbox{\scriptsize$\abst{\mathscr{V}}$}}(X)$ 
\cite{Taylor} and if $X$ is Tychonoff, then $F^\mathscr{V}(X)$ is Hausdorff and contains $X$ as a 
subspace~\cite[Theorems~2.1 and~2.2]{MorrisII} (more precisely, $\sigma$ is a homeomorphic embedding, and we 
identify $X$ with $\sigma(X)$). The group $F^\mathscr{V}(X)$ can be represented as the union $\bigcup_{n\in 
\omega}F_n^\mathscr{V}(X)$, where each $F_n^\mathscr{V}(X)$ is the subspace of $F^\mathscr{V}(X)$ consisting of 
all words of length at most $n$ in~$X$. 

\begin{theorem}
\label{new_old_theorem}
For any full variety $\mathscr V$ of topological groups and any Tychonoff space $X$, all sets 
$F_n^\mathscr{V}(X)$, $n\in \omega$, are closed in $F^\mathscr{V}(X)$ and the family $\{F_n^\mathscr{V}(X): n\in 
\omega\}$ absorbs convergent sequences in $F^\mathscr{V}(X)$. 
\end{theorem}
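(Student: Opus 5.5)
The statement has two parts, closedness of the sets $F_n^\mathscr{V}(X)$ and absorption of convergent sequences. I would handle both by mapping $F^\mathscr{V}(X)$ by continuous homomorphisms into free topological groups on compact spaces, where the sets $F_n^\mathscr{V}$ are compact. The algebraic input is Proposition~\ref{prop:afgk}(i) together with Proposition~\ref{prop:property_afg}. Together they show the following: if $f\colon X\to K$ is a map and $\hat f\colon F^\mathscr{V}(X)\to F^\mathscr{V}(K)$ its extension, and $f$ is injective on $\supp g$, then $\hat f(g)$ has the same length as $g$. Indeed, $\hat f$ restricted to $\langle \supp g\rangle=F^\mathscr{V}(\supp g)$ is an isomorphism onto $\langle f(\supp g)\rangle=F^\mathscr{V}(f(\supp g))$. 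Moreover, lengths in $\langle A\rangle$ and in the whole free group coincide by Proposition~\ref{prop:afgk}(i). Note also that $\hat f(F_n^\mathscr{V}(X))\subset F_n^\mathscr{V}(K)$ always holds.

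\emph{Closedness.} Let $g\notin F_n^\mathscr{V}(X)$, and let $S=\supp g$, which is finite. Since $X$ is Tychonoff, I choose continuous functions $\varphi_s\colon X\to[0,1]$, $s\in S$, with $\varphi_s(t)=1$ if $t=s$ and $\varphi_s(t)=0$ if $t\ne s$ for $t\in S$. Set $K=[0,1]^S$ and $f=(\varphi_s)_{s\in S}\colon X\to K$. The map $f$ is continuous and injective on $S$, so $\hat f(g)$ has length $>n$. In $F^\mathscr{V}(K)$ the set $F_n^\mathscr{V}(K)$ is a continuous image of the compact space $(K\oplus K\oplus\{e\})^n$ under the map taking a tuple to the product with the appropriate inverses; hence it is compact and therefore closed. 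Thus $\hat f^{-1}\bigl(F^\mathscr{V}(K)\setminus F_n^\mathscr{V}(K)\bigr)$ is a neighborhood of $g$ disjoint from $F_n^\mathscr{V}(X)$.

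\emph{Absorption.} Consider the embedding $X\subset\beta X$ and the induced continuous homomorphism $\hat\imath\colon F^\mathscr{V}(X)\to F^\mathscr{V}(\beta X)$. It is injective on all of $X$, so by the remark above it is algebraically injective and preserves lengths. Let $(g_k)$ converge in $F^\mathscr{V}(X)$. Then $\{\hat\imath(g_k)\}$ together with its limit is compact in $F^\mathscr{V}(\beta X)$. I would invoke the known fact that, for compact $\beta X$, the group $F^\mathscr{V}(\beta X)$ is a $k_\omega$-space, namely the inductive limit of its compact subsets $F_n^\mathscr{V}(\beta X)$ (Mack--Morris--Ordman, valid for full varieties). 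By Lemma~\ref{lemma:ind-limit}, the sequence $(\hat\imath(g_k))$ lies in some $F_n^\mathscr{V}(\beta X)$. Since $\hat\imath$ preserves lengths, all $g_k$ lie in $F_n^\mathscr{V}(X)$.

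The main obstacle is the $k_\omega$ property of $F^\mathscr{V}(\beta X)$. If it cannot simply be cited, I would prove it as follows. Show that the topology of $F^\mathscr{V}(K)$ for compact $K$ coincides with the inductive-limit topology $\tau$ of the compact sets $F_n^\mathscr{V}(K)$. To do so, verify that $\tau$ is a group topology; here one uses that products of $k_\omega$ spaces are $k_\omega$ and that multiplication maps $F_n\times F_m$ into $F_{n+m}$. Then $(\overline{F^\mathscr{V}(K)},\tau)\in\mathscr{V}$ and the universal property give the coincidence.
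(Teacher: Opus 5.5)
Your proof is correct. The absorption half is essentially the paper's argument. The paper also embeds $X$ in a compact $K$ (e.g.\ $\beta X$) and notes that the induced homomorphism is injective with $F_n^\mathscr{V}(X)=F_n^\mathscr{V}(K)\cap F^\mathscr{V}(X)$, using Proposition~\ref{prop:afgk}(i) with $A=X$. It then invokes the inductive-limit property of $F^\mathscr{V}(K)$, citing Mal'tsev rather than Mack--Morris--Ordman, and finishes with Lemma~\ref{lemma:ind-limit} and Remark~\ref{rem:finer}. Your fallback sketch of that inductive-limit property (a $k_\omega$ group topology $\tau$ plus the universal property) is the standard proof and is sound. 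What it really uses is that $(F,\tau)^2$ carries the inductive-limit topology of the sets $F_n\times F_n$, which holds because these compacta are cofinal.

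Where you genuinely differ is closedness. The paper gets it from the same single embedding: $F_n^\mathscr{V}(K)$ is compact, hence closed, and the topology of $F^\mathscr{V}(X)$ is finer than the one induced from $F^\mathscr{V}(K)$. You instead build, for each $g\notin F_n^\mathscr{V}(X)$, a separate map $X\to[0,1]^{\supp g}$ that is injective on $\supp g$, and use your lemma that such maps preserve the length of $g$. That argument is valid and shows slightly more: closedness is witnessed by homomorphisms into free groups on finite-dimensional cubes, with no global embedding. In your write-up, though, it is redundant. Once you know that $\hat\imath\colon F^\mathscr{V}(X)\to F^\mathscr{V}(\beta X)$ is injective and length-preserving, you have $F_n^\mathscr{V}(X)=\hat\imath^{-1}\bigl(F_n^\mathscr{V}(\beta X)\bigr)$. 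This set is closed by the same compactness argument you already give, so both halves could run through $\hat\imath$, as in the paper.
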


\begin{proof}
Let $K$ be any compact space containing $X$ (e.g., $K=\beta X$). The embedding of $X$ into $K$ extends to a 
continuous homomorphism $i\colon F^\mathscr{V}(X)\to F^\mathscr{V}(K)$; its image is the subgroup $\langle  
X\rangle$ of $F^\mathscr{V}(K)$, which is algebraically the free group $F^{\hbox{\scriptsize$\abst{\mathscr{V}}$}}(X)$ (see 
Section~\ref{sec:free_abstract_groups}), and hence $i$ is injective. We identify $F^\mathscr{V}(X)$ with $\langle 
X\rangle\subset F^\mathscr{V}(K)$ as an abstract group; the topology of $F^\mathscr{V}(X)$ is finer than that 
inherited from $F^\mathscr{V}(K)$, and $F_n^\mathscr{V}(X)=F_n^\mathscr{V}(K)\cap F^\mathscr{V}(X)$ by 
Proposition~\ref{prop:afgk}(i).

Mal'tsev proved that $F^\mathscr{V}(K)$ has the inductive limit topology with respect to the decomposition 
$F^\mathscr{V}(K)= \bigcup_{n\in \omega}F_n^\mathscr{V}(K)$ \cite[Lemma in Section~5]{Maltsev1957}. 
Each set $F_n^\mathscr{V}(K)$ is compact, being the image of the compact space 
$\bigl(K\oplus \{e\}\oplus K^{-1}\bigr)^n$ under the continuous natural multiplication map 
$(x_1,\dots, x_n)\mapsto x_1\dots x_n$. Therefore, each $F_n^\mathscr{V}(K)$ is closed in $F^\mathscr{V}(K)$ and 
hence each $F_n^\mathscr{V}(X)$ is closed in $F^\mathscr{V}(X)$.

By Lemma~\ref{lemma:ind-limit} the family $\{F_n^\mathscr{V}(K): n\in\omega\}$ absorbs convergent sequences in 
$F^\mathscr{V}(K)$. By Remark~\ref{rem:finer} the family $\{F_n^\mathscr{V}(K)\cap F^\mathscr{V}(X): 
n\in\omega\}=\{F_n^\mathscr{V}(X): n\in \omega\}$ absorbs convergent sequences in $F^\mathscr{V}(X)$. 
\end{proof}

\begin{proposition}
\label{new_old_proposition}
For any nondiscrete subgroup $G$ of $F^\mathscr{V}(X)$ and any $n\in \omega$, the intersection $F_n^{\mathscr 
V}(X)\cap G$ is closed and nowhere dense in $G$. 
\end{proposition}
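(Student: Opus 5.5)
Closedness is immediate: by Theorem~\ref{new_old_theorem}, $F_n^{\mathscr V}(X)$ is closed in $F^{\mathscr V}(X)$, so $F_n^{\mathscr V}(X)\cap G$ is closed in $G$. Since a closed set is nowhere dense exactly when it has empty interior, it remains to show that $F_n^{\mathscr V}(X)\cap G$ has empty interior in $G$. We argue by contradiction and suppose there is a nonempty open set $W\subset G$ with $W\subset F_n^{\mathscr V}(X)$.

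First we move this open set to the identity. Pick $g\in W$. Then $V=g^{-1}W$ is an open neighborhood of $e$ in $G$, and $V\subset g^{-1}F_n^{\mathscr V}(X)\subset F_{n+l}^{\mathscr V}(X)$, where $l$ is the length of $g$. This uses only that length is subadditive and that $l(g^{-1})=l(g)$. Write $m=n+l$.

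Next we use nondiscreteness. Because $G$ is a nondiscrete Hausdorff topological group, every neighborhood of $e$ in $G$ is infinite. By continuity of multiplication we can choose an open neighborhood $U$ of $e$ in $G$ with $U^{m+1}\subset V$; we get it by iterating the choice of $U'$ with $U'U'\subset U''$ finitely many times. Then $U$ is infinite and
$$U^{m+1}\subset V\subset F_m^{\mathscr V}(X).$$
This contradicts Proposition~\ref{prop:afgn-finete} applied to the infinite set $M=U$. Hence $F_n^{\mathscr V}(X)\cap G$ has empty interior in $G$, and so it is nowhere dense.

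There is no real obstacle here. The combinatorial core is already contained in Proposition~\ref{prop:afgn-finete}, and the only care needed is in translating an arbitrary open set to a neighborhood of the identity while controlling word length.
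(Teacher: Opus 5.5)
Your proof is correct and is essentially the paper's argument. You translate the open set to the identity while controlling word length; your bound $n+l(g)$ is slightly sharper than the paper's $2k$, but it plays the same role. You then choose an infinite neighborhood $U$ of $e$ with $U^{m+1}$ inside it, which contradicts Proposition~\ref{prop:afgn-finete}.
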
 

\begin{proof}
Let $e$ be the identity element of $F^\mathscr{V}(X)$. By Theorem~\ref{new_old_theorem} 
each $F_n^\mathscr{V}(X)$ is closed in 
$F^\mathscr{V}(X)$; therefore, each $F_n^\mathscr{V}(X)\cap G$ is closed in $G$. 
Suppose that there exists a $k$ for which $F_k^\mathscr{V}(X)\cap G$ has nonempty 
interior in $G$, i.e., contains a neighborhood $U$ of some word $g\in F_k^\mathscr{V}(X)\cap G$. 
Then, for some neighborhood $V$ of $e$ in $G$, we have $gV\subset U$. Clearly, 
$V\subset g^{-1}U\subset F_{2k}^\mathscr{V}(X)$. 
Let $W$ be a neighborhood  of $e$ in $G$ for which $W^{2k+1}\subset V$.
Since $G$ is a nondiscrete Hausdorff group, the set $W$ is infinite.
Proposition~\ref{prop:afgn-finete} implies $W^{2k+1}\not\subset F_{2k}^\mathscr{V}(X)$. This  contradiction 
proves Proposition~\ref{new_old_proposition}.
\end{proof}

An immediate corollary of this proposition is the following theorem.

\begin{theorem}
\label{t2}
For any full variety $\mathscr V$ of topological groups and any Tychonoff space $X$, 
every Baire subgroup of $F^\mathscr{V}(X)$ is discrete.
\end{theorem}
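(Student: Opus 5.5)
The proof will be by contradiction, combining Proposition~\ref{new_old_proposition} with the covering $F^\mathscr{V}(X)=\bigcup_{n\in\omega}F_n^\mathscr{V}(X)$. Let $G$ be a Baire subgroup of $F^\mathscr{V}(X)$, and suppose that $G$ is not discrete.

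First, $G$ is the countable union
$$
G=\bigcup_{n\in\omega}\bigl(F_n^\mathscr{V}(X)\cap G\bigr),
$$
since every element of $F^\mathscr{V}(X)$ is represented by a word of some finite length. Second, because $G$ is nondiscrete, Proposition~\ref{new_old_proposition} says that each set $F_n^\mathscr{V}(X)\cap G$ is closed and nowhere dense in $G$. Hence $G$ is meager in itself.

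A nonempty Baire space cannot be meager in itself. Indeed, the complements $G\setminus\bigl(F_n^\mathscr{V}(X)\cap G\bigr)$ are open and dense in $G$, so by the Baire property their intersection is dense, in particular nonempty. But that intersection is empty, since the sets $F_n^\mathscr{V}(X)\cap G$ cover $G$. This contradiction shows that $G$ is discrete.

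There is no real obstacle here. All the substantive work, namely closedness via Mal'tsev's theorem and nowhere density via Proposition~\ref{prop:afgn-finete}, is already contained in Proposition~\ref{new_old_proposition}. The only points to check are that $G$ is nonempty, which holds because $e\in G$, and that the Baire property is applied to $G$ with its subspace topology, which is the topology Proposition~\ref{new_old_proposition} refers to.
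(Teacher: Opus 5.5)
Your proposal is correct and follows the paper's own argument: the paper derives Theorem~\ref{t2} as an immediate corollary of Proposition~\ref{new_old_proposition}, since a nondiscrete subgroup $G$ is the countable union of the closed nowhere dense sets $F_n^\mathscr{V}(X)\cap G$ and therefore cannot be Baire. Your write-up simply spells out this deduction explicitly.
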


Since the family $\{F_n^\mathscr{V}(X)\cap G:n\in\omega\}$ is increasing and absorbs convergent sequences in any 
subgroup $G$ of $F^\mathscr{V}(X)$, Propositions~\ref{prop:kfu-group-as} and~\ref{new_old_proposition}, together 
with Theorem~\ref{new_old_theorem}, imply the following result.

\begin{theorem}
\label{t3}
For any full variety $\mathscr V$ of topological groups and any Tychonoff space $X$, 
every $\varkappa$-Fr\'echet--Urysohn subgroup of $F^\mathscr{V}(X)$ is discrete.
\end{theorem}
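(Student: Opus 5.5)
We argue by contradiction. Let $G$ be a $\varkappa$-Fr\'echet--Urysohn subgroup of $F^\mathscr{V}(X)$, and suppose that $G$ is not discrete. The plan is to combine three earlier results, each applied to the family $\{F_n^\mathscr{V}(X)\cap G:n\in\omega\}$ regarded as a family of subsets of $G$.

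First, we check that this family meets the hypotheses of Proposition~\ref{prop:kfu-group-as}.
\begin{enumerate}
\item[(a)] Each set $F_n^\mathscr{V}(X)\cap G$ is closed in $G$. This holds because $F_n^\mathscr{V}(X)$ is closed in $F^\mathscr{V}(X)$ by Theorem~\ref{new_old_theorem}.
\item[(b)] The family is increasing, since $F_n^\mathscr{V}(X)\subset F_{n+1}^\mathscr{V}(X)$ by the definition of word length.
\item[(c)] The family absorbs convergent sequences in $G$. By Theorem~\ref{new_old_theorem}, $\{F_n^\mathscr{V}(X)\}$ absorbs convergent sequences in $F^\mathscr{V}(X)$. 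Restricting to the subspace $G$ preserves this property by Remark~\ref{rem:finer}.
\end{enumerate}

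Since $G$ is a $\varkappa$-Fr\'echet--Urysohn topological group, Proposition~\ref{prop:kfu-group-as} applies. It yields an $n\in\omega$ such that $F_n^\mathscr{V}(X)\cap G$ is a neighborhood of the identity $e$ in $G$. This set therefore has nonempty interior in $G$.

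On the other hand, $G$ is assumed nondiscrete, so Proposition~\ref{new_old_proposition} says that $F_n^\mathscr{V}(X)\cap G$ is nowhere dense in $G$. In particular its interior is empty. This contradicts the previous paragraph, so $G$ must be discrete.

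The substantive work has already been done earlier in the paper, so no step here is a real obstacle. Proposition~\ref{prop:kfu-group-as} carries the heavy part through Theorem~\ref{t1}: a $\varkappa$-Fr\'echet--Urysohn group is strongly $\varkappa$-Fr\'echet--Urysohn, and then Proposition~\ref{p3}(ii) applies. Proposition~\ref{new_old_proposition} rests on the algebraic Proposition~\ref{prop:afgn-finete}. The only point to double-check is (c): the restriction to $G$ must still absorb sequences converging in $G$. This is immediate, because a sequence converging in $G$ also converges in $F^\mathscr{V}(X)$.
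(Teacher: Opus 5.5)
Your proposal is correct and follows the paper's argument: you apply Proposition~\ref{prop:kfu-group-as} to the increasing family $\{F_n^\mathscr{V}(X)\cap G\}$, which is closed and absorbing by Theorem~\ref{new_old_theorem} and Remark~\ref{rem:finer}. This gives a neighborhood of the identity, which contradicts the nowhere density from Proposition~\ref{new_old_proposition} for nondiscrete $G$.
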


Recall that, given a topological space $X$, the standard notations $F(X)$, $A(X)$, and $B(X)$ are used for 
its free topological groups 
in the varieties of all topological groups, all Abelian topological groups, and all 
Boolean (that is, of exponent 2) topological groups, respectively.

\begin{corollary}
\label{cor:FAB}
 All Baire and all $\varkappa$-Fr\'echet--Urysohn subgroups of \tolerance1000 $F(X)$, $A(X)$, and $B(X)$ are discrete 
for any Tychonoff space~$X$.
\end{corollary}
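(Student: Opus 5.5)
The corollary is a direct specialization of Theorems~\ref{t2} and~\ref{t3}. The plan is to observe that $F(X)$, $A(X)$, and $B(X)$ are exactly the free topological groups $F^{\mathscr V}(X)$ for three particular full varieties $\mathscr V$, and then invoke those theorems.

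First, take $\abst{\mathscr V}$ to be, respectively, the variety of all abstract groups, of all Abelian groups (defined by the identity $xy=yx$), and of all Boolean groups (defined by $x^2=e$). Each is a variety by Birkhoff's theorem, and each is nontrivial, since it contains $\mathbb Z/2$. The corresponding full varieties $\mathscr V$ then consist of all topological groups, all Abelian topological groups, and all topological groups of exponent~$2$.

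Next, check that $F(X)$, $A(X)$, and $B(X)$ satisfy conditions $(\mathscr{F\!T}_1)$--$(\mathscr{F\!T}_3)$ for these three $\mathscr V$. This is just the standard universal property defining each of these groups. By uniqueness up to topological isomorphism, $F(X)=F^{\mathscr V}(X)$ in each case.

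Finally, apply Theorem~\ref{t2} to conclude that every Baire subgroup is discrete, and Theorem~\ref{t3} to conclude that every $\varkappa$-Fr\'echet--Urysohn subgroup is discrete. There is no real obstacle here. The only point requiring care is confirming that the Boolean and Abelian classes are indeed full varieties in the paper's sense, i.e., that they are determined by the underlying abstract group. This is immediate, since exponent~$2$ and commutativity are purely algebraic properties.
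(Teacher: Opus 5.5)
Your proposal is correct and matches the paper's approach. The paper also treats this corollary as an immediate specialization: it identifies $F(X)$, $A(X)$, and $B(X)$ with $F^{\mathscr V}(X)$ for the full (nontrivial) varieties of all, Abelian, and Boolean topological groups, and then applies Theorems~\ref{t2} and~\ref{t3}.
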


\begin{corollary}
\label{cor:FVX-kfu}
For a Tychonoff space $X$ and a full variety $\mathscr{V}$, the following conditions are equivalent: 
\begin{enumerate}
\item[\rm(i)] $X$ is discrete;
\item[\rm(ii)] $F^{\mathscr{V}}(X)$ is discrete;
\item[\rm(iii)] $F^{\mathscr{V}}(X)$ is $\varkappa$-Fr\'echet--Urysohn;
\item[\rm(iv)] $F^{\mathscr{V}}(X)$ is Baire.
\end{enumerate}
\end{corollary}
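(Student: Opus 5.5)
I would prove the equivalences by the cycle (i)$\Rightarrow$(ii)$\Rightarrow$(iii) and (ii)$\Rightarrow$(iv), closing with (iii)$\Rightarrow$(i) and (iv)$\Rightarrow$(i) via Theorems~\ref{t3} and~\ref{t2}.

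For (i)$\Rightarrow$(ii), assume $X$ is discrete. Let $D$ be the abstract group $F^{\hbox{\scriptsize$\abst{\mathscr{V}}$}}(X)$ with the discrete topology. Then $D$ belongs to the full variety $\mathscr V$, since fullness only constrains the underlying abstract group. Because $X$ is discrete, the identity map $X\to D$ is continuous. By $(\mathscr{F\!T}_3)$ it therefore extends to a continuous homomorphism $F^{\mathscr V}(X)\to D$, and algebraically this homomorphism is the identity. Consequently the topology of $F^{\mathscr V}(X)$ is finer than the discrete one, so $F^{\mathscr V}(X)$ is discrete.

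The implications (ii)$\Rightarrow$(iii) and (ii)$\Rightarrow$(iv) are immediate. A discrete space is Fr\'echet--Urysohn, since every point in the closure of a set lies in the set and the constant sequence converges to it. A discrete space is also Baire, since every nonempty open set contains an isolated point, so no nonempty open set is meagre.

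For (iii)$\Rightarrow$(i), apply Theorem~\ref{t3} with $G=F^{\mathscr V}(X)$ regarded as a subgroup of itself. This gives that $F^{\mathscr V}(X)$ is discrete. Since $X$ is a subspace of $F^{\mathscr V}(X)$ (Tychonoff $X$ embeds via $\sigma$), $X$ is discrete. The implication (iv)$\Rightarrow$(i) is identical, using Theorem~\ref{t2} in place of Theorem~\ref{t3}.

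None of these steps is a real obstacle. The only point needing care is (i)$\Rightarrow$(ii), where one must check that the discrete group lies in $\mathscr V$; this is exactly what fullness of the variety provides.
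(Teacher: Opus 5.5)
Your proposal is correct and follows the paper's proof essentially step for step. You obtain (i)$\Rightarrow$(ii) from the continuous isomorphism onto the discrete copy of $F^{\hbox{\scriptsize$\abst{\mathscr{V}}$}}(X)$, which lies in $\mathscr V$ by fullness; (ii)$\Rightarrow$(iii),(iv) are trivial; and you close the cycle by applying Theorems~\ref{t3} and~\ref{t2} to $F^{\mathscr V}(X)$ itself and passing to the subspace $X$.
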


\begin{proof}
If $X$ is discrete, then the abstract group $F^{\hbox{\scriptsize$\abst{\mathscr{V}}$}}(X)$ with the discrete topology belongs to 
$\mathscr{V}$, and the identity map of $X$ extends to a continuous isomorphism of $F^\mathscr{V}(X)$ onto it; 
hence $F^\mathscr{V}(X)$ is discrete. Clearly, (ii) implies (iii) and (iv). If (iii) or (iv) holds, then 
$F^\mathscr{V}(X)$ is discrete by Theorem~\ref{t3} or~\ref{t2}, and so is its subspace~$X$. 
\end{proof}

\section{Open questions}
\label{sec:questions}

As far as we know, the following questions remain open.

\begin{question}
\label{q1}
Is it true that every $\varkappa$-Fr\'{e}chet--Urysohn topological group contains a dense Fr\'{e}chet--Urysohn subspace?
\end{question}

\begin{question}
\label{q2}
Is it true that every compact topological group contains a dense Fr\'{e}chet--Urysohn subspace?
\end{question}

\begin{question}[{\cite[Problem~4.1.1]{AT}}]
\label{q3}
Is it true that every compact topological group contains a dense 
countably tight subspace?
\end{question}

\begin{question}[{\cite[Problem~4.1.2]{AT}}]
\label{q4}
Is it true that every compact topological group contains a dense 
sequential subspace?
\end{question}

Since compact groups are $\varkappa$-Fr\'echet--Urysohn, Question~\ref{q2} is a special case of 
Question~\ref{q1}. A positive answer to Question~\ref{q2} would imply positive answers to 
Questions~\ref{q4} and~\ref{q3}, because Fr\'echet--Urysohn spaces are sequential and sequential spaces are 
countably tight.

\end{document}